\numberwithin{equation}{section}
\crefname{equation}{Eq.}{Eqs.}
\crefname{eqnarray}{Eq.}{Eqs.}
\crefname{algo}{Algorithm}{Algorithms}
\crefname{conj}{Conjecture}{Conjectures}
\crefname{lem}{Lemma}{Lemmas}
\crefname{thm}{Theorem}{Theorems}
\crefname{claim}{Claim}{Claims}
\crefname{rmk}{Remark}{Remarks}
\crefname{prop}{Proposition}{Propositions}
\crefname{section}{Section}{Sections}
\crefname{appendix}{Appendix}{Appendices}
\crefname{cor}{Corollary}{Corollaries}
\crefname{figure}{Figure}{Figures}
\crefname{table}{Table}{Tables}
\crefname{example}{Example}{Examples}
\crefname{prob}{Problem}{Problems}
\crefname{assm}{Assumption}{Assumptions}
\crefname{defn}{Definition}{Definitions}
\newcommand{\ri}{\mathrm{i}}
\newcommand{\re}{\mathrm{e}}
\newcommand{\bbA}{\mathbb{A}}
\newcommand{\bbN}{\mathbb{N}}
\newcommand{\bbL}{\mathbb{L}}
\newcommand{\bbZ}{\mathbb{Z}}
\newcommand{\bbR}{\mathbb{R}}
\newcommand{\bbC}{\mathbb{C}}
\newcommand{\bbP}{\mathbb{P}}
\newcommand{\bbQ}{\mathbb{Q}}
\def\bary{\begin{array}} 
\def\eary{\end{array}} 
\def\ben{\begin{enumerate}} 
\def\een{\end{enumerate}}
\def\bit{\begin{itemize}} 
\def\eit{\end{itemize}}
\def\nn{\nonumber} 
\newcommand{\cY}{\mathcal{Y}}
\newcommand{\cZ}{\mathcal{Z}}
\newcommand{\cO}{\mathcal{O}}
\newcommand{\cP}{\mathcal{P}}
\newcommand{\cW}{\mathcal{W}}
\def\beq{\begin{equation}}                     %
\def\eeq{\end{equation}}                       %
\def\bea{\begin{eqnarray}}                     
\def\eea{\end{eqnarray}}
\def\bary{\begin{array}} 
\def\eary{\end{array}} 
\def\ben{\begin{enumerate}} 
\def\een{\end{enumerate}}
\def\bit{\begin{itemize}} 
\def\eit{\end{itemize}}
\def\nn{\nonumber}
\def\a{\alpha}
\def\b{\beta}
\theoremstyle{plain}
\newtheorem{thm}{Theorem}[section]
\newtheorem{prop}[thm]{Proposition}
\newtheorem*{conj*}{Conjecture}
\newtheorem*{cor*}{Corollary}
\theoremstyle{definition}
\newtheorem*{rem*}{Remark}
\newtheorem*{rems*}{Remarks}
\newtheorem{example}{Example}[section]
\newcommand{\GITl}[1]{\backslash \!\! \backslash _{\kern-.2em #1 \kern0.1em}}
\newcommand{\GIT}[1]{/\!\!/_{\kern-.2em #1 \kern0.1em}}
\renewcommand{\l}{\left}
\renewcommand{\r}{\right}
\newcommand{\bra}{\left\langle}
\newcommand{\ket}{\right\rangle}
\newcommand{\ev}{\operatorname{ev}}
\newcommand{\qbinom}{\genfrac{[}{]}{0pt}{}}
\def\bred{\begin{color}{red}}
\def\ered{\end{color}}
\def\bes{\begin{subequations}}
\def\ees{\end{subequations}}
\newcommand\PP{\mathbb P}
\newcommand\Z{\mathbb Z}
\newtheorem{lemma-definition}[theorem]{Lemma-Definition}
\theoremstyle{definition}
\theoremstyle{remark}
\numberwithin{equation}{section}
\numberwithin{figure}{section}
\newcommand{\ZZ} {\mathbb{Z}}
\renewcommand{\AA} {\mathbb{A}}
\newcommand {\shC}  {\mathcal{C}}
\newcommand {\shE}  {\mathcal{E}}
\newcommand {\shO}  {\mathcal{O}}
\newcommand {\shY}  {\mathcal{Y}}
\newcommand {\ol} {\overline}
\DeclareMathOperator {\hhh} {H}
\newcommand{\sstyle}{\scriptstyle}
\def\mydate{\ifcase\month \or January\or February\or March\or
April\or May\or June\or July\or August\or September\or October\or 
November\or December\fi \space\number\day,\space\number\year}
\DeclareMathOperator{\mmm}{M}
\DeclareMathOperator{\scatt}{Scatt}
\newcommand*\circleed[1]{\tikz[baseline=(char.base)]{
            \node[shape=circle,draw,inner sep=1pt] (char) {#1};}}
\newcommand{\exI}{\bbP(1,a,b)}
\newcommand{\exII}{Y^{[2]}_{(a,b)}}
\newcommand{\exIII}{Y^{[3]}_{(a,b)}}
\begin{document}

\title{
  Stable maps to Looijenga pairs: orbifold examples
  }

\author{Pierrick Bousseau}

\address{\tiny
Universit\'e Paris-Saclay, CNRS, Laboratoire de mathématiques d'Orsay, 91405, Orsay, France}

\email{pierrick.bousseau@u-psud.fr}



\author{Andrea Brini}
\address{\tiny School of Mathematics and Statistics,
University of Sheffield, S11 9DW, Sheffield, United Kingdom. On leave from CNRS, DR 13, Montpellier, France}
\email{a.brini@sheffield.ac.uk}

\author{Michel van Garrel}

\address{\tiny University of Birmingham, School of Mathematics, B15 2TT, Birmingham, United Kingdom}
\email{m.vangarrel@bham.ac.uk}

\thanks{This project has been supported by the European Union's Horizon
  2020 research and innovation programme under the Marie Sklodowska-Curie
  grant agreement No 746554 (M.~vG.), the Engineering and Physical Sciences
  Research Council under grant agreement ref.~EP/S003657/2 (A.~B.) and by Dr.\ Max R\"ossler, the Walter Haefner Foundation and the ETH Z\"urich Foundation (P.~B. and M.~vG.).}

\begin{abstract}
In \cite{BBvG2} we established a series of correspondences relating five enumerative theories of log Calabi--Yau surfaces, i.e.\ pairs $(Y,D)$ with $Y$ a
smooth projective complex surface and $D=D_1+\dots +D_l$ an anticanonical divisor on $Y$ with each $D_i$ smooth and nef.
In this paper we explore the generalisation to $Y$ being a smooth Deligne--Mumford stack with projective coarse moduli space of dimension 2, and $D_i$ nef $\bbQ$-Cartier divisors.
We consider in particular three infinite families of orbifold log Calabi-Yau
surfaces, and for each of them we provide closed form solutions of 
the maximal contact log Gromov--Witten theory of the pair $(Y,D)$, the local Gromov--Witten theory of the total space of $\bigoplus_i \cO_Y(-D_i)$, and the open Gromov--Witten of toric orbi-branes in a Calabi--Yau 3-orbifold associated to $(Y,D)$. We also consider new examples of  BPS integral structures underlying these invariants, and relate them to the Donaldson--Thomas theory of a symmetric quiver specified by $(Y,D)$, and to a class of open/closed BPS invariants.
\end{abstract}

\dedicatory{In grateful memory of Boris Anatol'evich Dubrovin, 1950--2019}

\maketitle
\setcounter{tocdepth}{1}
\tableofcontents

\section{Introduction}

In \cite{BBvG2}, we established a  series of correspondences between {\it a priori} distinct enumerative theories of Gromov--Witten (GW)/Donaldson--Thomas (DT) type associated to smooth log Calabi-Yau surface of maximal boundary with nef boundary components, or {\it nef Looijenga pairs}: these are pairs $(Y,D)$ where $Y$ is a smooth projective surface and $|-K_Y|\ni D=D_1+\cdots+D_l$ is an anticanonical normal crossings divisor with $l>1$ smooth and nef irreducible components $D_j$. For a nef Looijenga pair we proved an equivalence between the log GW theory of the pair $(Y,D)$, the local GW theory of the total space of the sum of dual line bundles to the irreducible components $D_j$, the open GW theory of Aganagic--Vafa branes in a Calabi--Yau threefold associated to $(Y,D)$, the DT theory of a symmetric quiver specified by $(Y,D)$, and a variety of BPS invariants considered by Klemm--Pandharipande, Ionel--Parker, and Labastida--Mari\~no--Ooguri--Vafa. 
Moreover, we provided  closed-form solutions for the calculation of the these invariants in all the finitely many deformation families of such pairs. \\

In this companion note we explore the extension of such correspondences to include orbifolds, and provide compelling evidence that the bulk of the correspondences put forward in \cite{BBvG2} generalise to this setting essentially verbatim.
We consider pairs $(\cY,D=D_1+\cdots +D_l)$ 
where $\cY$ is a smooth complex Deligne--Mumford stack with coarse moduli space a normal Gorenstein projective surface $Y$, $(Y,D)$ is log smooth (in particular, the singularities are concentrated along the codimension 2 strata of $D$), $D\in|-K_Y|$, and the irreducible components $D_j$ are nef and $\bbQ$-Cartier for all $j=1, \dots, l$. In particular we will exemplify how and to what extent our circle of correspondences in \cite{BBvG2} generalises to this context in three infinite families of log Calabi--Yau orbifolds:\\

\begin{description}[leftmargin=.4cm]
\item[Example I] in this example, $Y$ is taken to be  the weighted projective plane $\bbP(1,a,b)$ for $a,b$ positive coprime integers with fan given in Figure \ref{fan:P1ab}. This surface has two orbifold singularities that are locally quotients of $\bbC^2$ by the finite cyclic group $\mu_a$, resp.\ $\mu_b$, and there is a toric line $D_{(b,a)}$ that joins both. Extending  $D_{(b,a)}$ to an anticanonical divisor by adding a general member $D_2$ of $|-K_Y-D_{(b,a)}|$ gives the Looijenga orbi-pair $\exI \coloneqq (\bbP(1,a,b),D=D_{(b,a)}+D_2)$. It is non-toric since the topological Euler characteristic of the complement of $D$ is $\chi(\bbP(1,a,b)\setminus D)=1$. 
\item[Example II] in this case we blow up $\exI$
in a smooth point of $D_2$. We denote the resulting surface with its choice of 2-component boundary by $\exII$.
\item[Example III] blowing up $\bbP(1,a,b)$ in a smooth point of one of its toric divisors leads to a non-toric nef orbi-Looijenga pair with $l=3$, which we denote by $\exIII$.
\end{description}

\begin{figure}[h]
\begin{tikzpicture}[smooth, scale=1.2]
\draw[step=1cm,gray,very thin] (-2.5,-2.5) grid (2.5,3.5);
\draw[thick] (-2.5,0) to (0,0);
\draw[thick] (0,-2.5) to (0,0);
\draw[thick] (0,0) to (2.2,3.3);
\node at (0.9,-1.7) {$D_{(0,-1)}\left(\frac{a}{b}\right)$};
\node at (2.4,2.3) {$D_{(b,a)}\left(\frac{1}{ab}\right)$};
\node at (-1.7,0.3) {$D_{(-1,0)}\left(\frac{b}{a}\right)$};
\node at (-1.5,-1.5) {$1$};
\node at (1.5,0.5) {$\frac{1}{b}$};
\node at (-0.5,1.5) {$\frac{1}{a}$};
\end{tikzpicture}
\caption{The fan of $\bbP(1,a,b)$ with toric intersection numbers}
\label{fan:P1ab}
\end{figure}

\section{Setup and main results}

\subsection{The enumerative theories}

Let $(Y,D)$ be a log smooth log Calabi-Yau surface with $D=D_1+\cdots+D_l$ and each $D_j$ irreducible and let $d\in\hhh_2(Y,\bbZ)$.
Provided that \'etale-locally around each singularity $Y(D)$ is isomorphic to a toric variety with its toric boundary, $Y(D)$ is log smooth. In particular this applies to Examples I-III.
We will use the short-hand notation $Y(D)$ to denote the log-scheme obtained by taking the divisorial log structure induced by $D$ on $Y$. 
For $n\geq0$, denote by $[n]_q$ the $q$-number $q^{\frac{n}{2}}-q^{-\frac{n}{2}}$, as well as the symmetrised $q$-factorials $[n]_q! \coloneqq \prod_{i=1}^n [i]_q$ and $q$-binomials $\qbinom{n}{m}_q \coloneqq [n]_q!/([m]_q! [n-m]_q!)$.

\subsubsection{All genus log GW invariants}

Let $g\geq0$. We are (virtually) counting genus $g$ degree $d$ curves in $X$ that have prescribed tangency conditions along the boundary $D$, namely we require the curves to meet each of $D_j$ in one point of maximal tangency $d\cdot D_j$. This is a moduli problem of virtual dimension $g+l-1$.

Log smoothness of $Y(D)$ guarantees the existence of the corresponding moduli space of basic stable log maps $\ol{\mmm}^{\log}_{g,m}(Y(D),d)$ as constructed by Abramovich--Chen \cites{Chen14,AbramChen14}
and Gross--Siebert \cite{GS13} by encoding the tangency conditions via log structures. It admits a rank $g$ vector bundle $\mathbb{E}$ whose fiber over $f \colon C \rightarrow Y(D)$ is the vector space $\hhh^0(C,\omega_C)$ of sections of the dualising sheaf of the domain curve. To cut down the virtual dimension to 0, we require the curves to pass through $l-1$ general points in the interior $Y\setminus D$ and further cap the virtual fundamental class by the top Chern class $\lambda_g:=c_g(\mathbb{E})$ of $\mathbb{E}$, leading to invariants
\begin{equation} \label{eq_log}
N^{\rm log}_{g,d}(Y(D)):=\int_{[\ol{\mmm}^{\log}_{g,l-1}(Y(D),d)]^{\rm vir}} (-1)^g \, \lambda_g \, \prod_{j=1}^{l-1} \ev_j^*([{\rm pt}]),
\end{equation}
where $\ev_j:\ol{\mmm}^{\log}_{g,l-1}(Y(D),d)\to X$ is the morphism given by evaluation at the $j$th point. We denote by $N^{\rm log}_{d}(Y(D)):=N^{\rm log}_{0,d}(Y(D))$.

We package the invariants into the fixed-degree, all-genus generating function
\beq
\mathsf{N}^{\rm log}_d(Y(D))(\hbar) \coloneqq \frac{1}{\left( 
2 \sin \left( \frac{\hbar}{2} \right) \right)^{l-2}}
\sum_{g \geqslant 0} N^{\rm log}_{g,d} \hbar^{2g-2+l} \,.
\eeq
By a combination of \cites{MR3904449,bousseau2018quantum} (see  \cite[Proposition 4.2]{BBvG2}), after the change of variable $q=e^{i \hbar}$, $\mathsf{N}^{\rm log}_d(Y(D))(\hbar)$ is the power series expansion in $\hbar$ of polynomials
$\mathsf{N}^{\rm log}_d(Y(D))(q)$ in $q^{\frac{1}{2}}$. 

The invariants 
$N^{\rm log}_{g,d}(Y(D))$ can be naturally viewed as invariants of the $3$-fold $Y(D) \times \AA^1$. Indeed, the Gromov--Witten obstruction theories for stable maps to the surface $Y(D)$ and the $3$-fold 
$Y(D)\times \AA^1$ differ by the space $H^1(C,\mathcal{O}_C)$
\cite[Lemma 7]{MPT}, which by Serre duality is dual to $H^0(C,\omega_C)$.
Thus, the insertion of $(-1)^g \lambda_g$ in the Gromov--Witten theory of the surface $Y(D)$ exactly reproduces Gromov--Witten invariants of the $3$-fold $Y(D) \times \AA^1$. Essentially for the same reason, the class 
$(-1)^g \lambda_g$ appears also naturally in the higher genus extension of the log-local correspondence for a smooth divisor presented in \cite{Bousseau:2020ckw}. In \cites{BBvG2}, in particular Section 1.4, we explained how higher genus log Gromov--Witten invariants of log Calabi-Yau surfaces with insertion of $(-1)^g \lambda_g$ fit into a web of 
log-local-open correspondences. In the present paper, we use the invariants $N^{\rm log}_{g,d}(Y(D))$ to extend this web of correspondences to the orbifold setting.

\subsubsection{Local GW invariants}
A different class of invariants of $(Y,D)$ arises by considering the local theory of $\mathrm{Tot}\big( \bigoplus_j \cO_Y(-D_j)\big)$. This is a non-compact Calabi--Yau-$(l+2)$ fold, and since for $l>1$ there are no non-zero Gromov--Witten invariants with point insertions for $g >0$, we restrict below to genus 0.\\
Suppose $Y$ admits a presentation as a Gorenstein GIT quotient $Y=Z\GIT{}{G}$ for a complex smooth projective variety  $Z$ and reductive group $G$, and write $\cY=[Z\GIT{}{G}]$ for the Deligne--Mumford quotient stack it represents. While  the Gorenstein surface $Y$ (with trivial log structure) is not smooth, $Y(D)$ and $\cY$ are smooth in the respective categories by definition.
The genus 0 local GW theory of $Y(D)$ is a (virtual) count of rational orbi-curves in the $(l+2)$-dimensional non-compact Calabi--Yau orbifold $\mathcal{E}_{Y(D)} \coloneqq \mathrm{Tot}(\oplus_{i=1}^l (\cO_{\shY}(-D_i)))$ with coarse space $E_{Y(D)}\coloneqq \mathrm{Tot}(\oplus_{i=1}^l (\cO_{Y}(-D_i)))$. Given that the log GW theory is insensitive to the twisted sector, we will only be interested in the untwisted sector of the orbifold GW theory of $\shE_{Y(D)}$ \cite{MR2450211}.

Denote by $\ol{\mmm}_{0,m}(\shY,d)$ the moduli stack of twisted genus 0 $m$-marked stable maps $[f:\shC\to\shY]$ with $f_*([\shC])=d$ and $\shC$ an $m$-pointed twisted curve. We write $\ol{\mmm}_{0,m}(Y,d)$ for the substack of twisted stable maps such that the image of the evaluation maps is contained in the age zero component of the inertia stack of $\shY$.
The moduli stack $\ol{\mmm}_{0,m}(\shY,d)$ has a perfect obstruction theory, inducing a virtual fundamental class
\beq
[\ol{\mmm}_{0,m}(Y,d)]^{\rm vir}\in\hhh_{2{\rm vdim}}(\ol{\mmm}_{0,m}(Y,d),\bbQ),
\eeq
where ${\rm vdim} = -K_Y\cdot d + m - 1$.

Assuming that $d\cdot D_j>0$, there is a rank $-K_Y\cdot d-l$ obstruction vector bundle ${\rm Ob}_{D}$ on $\ol{\mmm}^{\log}_{0,m}(Y,d)$ with fibre $\hhh^1(\shC,f^*\bigoplus_{j=1}^l\shO_{\shY}(-D_j))$ over a twisted stable map $[f:\shC\to\shY]$.
It is defined as ${\rm Ob}_D \coloneqq R^1\pi_*(f^{\rm univ})^*\left( \bigoplus_{j=1}^l \shO_Y(-D_j) \right)$ for $\pi : \shC^{\rm univ} \to \ol{\mmm}_{0,m}(\shY,d)$ the universal curve and $f^{\rm univ} : \shC^{\rm univ} \to \shY$ the universal twisted stable map.
Restricting to the component of the inertia stack of age zero, we obtain the virtual fundamental class
\[
[\ol{\mmm}_{0,m}(E_{Y(D)},d)]^{\rm vir}:=[\ol{\mmm}_{0,m}(Y,d)]^{\rm vir}\cap c_{\rm top}({\rm Ob}_D)\in \hhh_{2(l-1+m)}(\ol{\mmm}_{0,m}(Y,d),\bbQ).
\]

Restricting to the untwisted sector yields evaluations maps ${\rm ev}_j : \ol{\mmm}_{0,m}(Y,d)\to Y$ and
we define the following two classes of local invariants
\bea
\label{eq:Nloc}
N^{\rm loc}_{d}(Y(D)) & \coloneqq & \int_{[\ol{\mmm}_{0,l-1}(E_{Y(D)},d)]^{\rm vir}} \, \prod_{j=1}^{l-1} \ev_1^*([{\rm pt}]) \,, \\
N^{\rm loc, \psi}_{d}(Y(D)) & \coloneqq & \int_{[\ol{\mmm}_{0,1}(E_{Y(D)},d)]^{\rm vir}} \, \ev_1^*([{\rm pt}]) \psi_1^{l-2} \,,
\label{eq:Nlocpsi}
\eea 
where $\psi_i=c_1(\bbL_i)$ is the first Chern class of the $i^{\rm th}$ tautological line bundle on $\ol{\mmm}_{0,m}(Y,d)$.

\subsubsection{All genus open GW invariants}

In \cite[Construction 6.4]{BBvG2}, we showed how to associate to a smooth Looijenga pair $Y(D)$ satisfying certain positivity properties \cite[Definition 6.3]{BBvG2} a triple $Y^{\rm op}(D)=(X,L,\mathsf{f})$ with
$X$ a semi-projective toric Calabi--Yau 3-fold, 
$L=L_1 \cup \cdots \cup L_{l-1}$ a disjoint union of 
$l-1$ Aganagic--Vafa toric  Lagrangians \cite{Aganagic:2000gs} in $X$ and $\mathsf{f}$ a framing for $L$. At first approximation, $X$ is the total space of $K_{Y\setminus\left(D_1\cup\dots\cup D_{l-1}\right)}$, the $L_j\simeq S^1\times\bbR^2$ are Lagrangians that contract to a vanishing cycle $[S^1]$ of $Y$ near $D_j$, and $\mathsf{f}$ is determined by the compactification given by adding back the $D_j$, $j<l$.
See \cite[Construction 6.4]{BBvG2}, the framing determines the compactification of (a toric limit of) $Y\setminus(D_1\cup\cdots\cup D_{l-1})$ to (a toric limit of) $Y$. At the level of their associated polyhedra, the framing determines the additional halfspaces to intersect with to go from from the polyhedron of $Y\setminus(D_1\cup\cdots\cup D_{l-1})$ to the polytope of $Y$ (with anticanonical polarizations). The framings correspond to the slopes of the edges.

It is immediate to verify from \cite[Construction 6.4]{BBvG2} that the above generalises to the case of Looijenga orbi-pairs $Y(D)$, for which $Y^{\rm op}(D)=(X,L,\mathsf{f})$ is in general a semi-projective Gorenstein orbifold $X$ with fractionally framed orbifold toric Lagrangians $(L,\mathsf{f})$ \cite{MR2861610}. 
The orbifold case introduces a small modification. If $Y$ has orbifold singularities at the toric 0-strata, we obtain rational framings.
More precisely, denote by $\bbA^1$ one of the toric strata corresponding to an outer edge of the toric graph with framing. Then the $\bbA^1$ is compactified in $Y$ by adding a point in $Y$ that is a cyclic quotient singularity with isotropy group $\bbZ/r\bbZ$, for $r$ the denominator of the framing.
Adapting it to the orbifold case, the construction moreover induces a natural injection $\iota \colon \hhh^{\rm rel}_2(Y^{\rm op}(D),L;\Z) \hookrightarrow {\rm A}_1(Y,\Z)$ as we review in Section \ref{sec:logopen}, with all the curve classes $d\in\hhh_2(Y,\bbZ)$ lying in its image. 
In Examples I--III, $X$ and $L$ will always be smooth, but the framing $\mathsf{f}$ will be fractionally shifted by rational numbers $f_i=p_i/r_i$ from the canonical framing on each connected component $L_i$ of $L$.

The open GW theory of $Y^{\rm op}(D)$ was defined in the algebraic category\footnote{See \cite{MR2861610} for a definition of open GW invariants of toric orbifold Lagrangians using localisation, and \cite{MR3948935} for a definition for smooth toric Lagrangians with fractional framing using relative GW theory.} in \cite{Li:2004uf}. Given partitions $\mu_i$ of length $\ell(\mu_i)$, $i=1, \dots, l-1$, there is a virtual dimension zero moduli space $\ol{\mmm}_{g; \beta; \mu_1, \dots, \mu_{l-1}}(Y^{\rm op}(D))$ of relative degree $\beta$ open stable morphisms to $Y^{\rm op}$ from genus-$g$, open Riemann surfaces with $\sum_{i=1}^{l-1} \ell(\mu_i)$ connected components of the boundary mapping to $L_i$ with winding numbers around $S^1 \hookrightarrow L_i$ equal to the parts of $\mu_i$. The corresponding open GW invariants,
\beq
O_{g,\beta,\vec \mu}(Y^{\rm op}(D)) = \int_{[\ol{\mmm}_{g; \beta; \vec \mu} (Y^{\rm op}(D))]^{\rm vir}} 1
\eeq 
can be encoded into formal generating functions
\bea
\mathsf{O}_{\beta;\vec \mu}(Y^{\rm op}(D))(\hbar) &\coloneqq & \sum_{g} \hbar^{2g-2+\ell(\vec\mu)}  O_{g; \beta; \vec \mu}(Y^{\rm op}(D)) \,,
\label{eq:openfreeen}
\eea
with $\ell(\vec\mu)=\sum_{i=1}^s \ell(\mu_i)$. We will write simply $O_{g; \beta}(Y^{\rm op}(D))$ and $\mathsf{O}_{\beta}(Y^{\rm op}(D))(\hbar)$ 
for the $(l-1)$-holed open GW invariants obtained when 
$\mu_i=(m_i)$, which are then determined by 
the class $\beta  \in \hhh^{\rm rel}_2(Y^{\rm op}(D),L;\ZZ)$. 

\subsubsection{Quiver DT invariants}

Let $\mathsf{Q}$ be a symmetric quiver with $n$~vertices and, for dimension vectors $\mathsf{d}=\sum_i d_i v_i$, $\mathsf{e}=\sum_i e_i v_i \in \bbN \mathsf{Q}_0= \bbN v_1+ \dots+\bbN v_n $, denote by $E_\mathsf{Q}(\mathsf{d},\mathsf{e})$ the Gram matrix of the Euler form
\beq
E_\mathsf{Q}(\mathsf{d},\mathsf{e}) \coloneqq \sum_{i=1}^n d_i e_i -\sum_{\a:v_i \to v_j} d_i e_j\,.
\eeq
The motivic DT invariants $\mathrm{DT}_{\mathsf{d}; i}(\mathsf{Q})$ of $\mathsf{Q}$ are defined from the plethystic generating function
\beq
\mathrm{Exp}\l(\frac{1}{[1]_q} \sum_{\mathsf{d} \neq 0} \sum_{i \in \bbZ} \mathrm{DT}_{\mathsf{d}; i}(\mathsf{Q}) \mathsf{x}^\mathsf{d} (-q^{1/2})^{-i} \r) = 
\sum_{\mathsf{d} \in \bbN^n} \frac{\big(-q^{1/2}\big)^{E_Q(\mathsf{d},\mathsf{d})} \mathsf{x}^\mathsf{d}}{\prod_{i=1}^n (q;q)_{d_i}} \,,
\label{eq:DTmot_intro}
\eeq
where $\mathsf{x}^\mathsf{d} =\prod_{i=1}^n x_i^{d_i}$. 
Using the terminology of 
\cite[\S 3.3]{Bou19a}, the right-hand side is the generating series of Poincar\'e rational functions of the stacks of representations of $\mathsf{Q}$. The numerical DT invariants $\mathrm{DT}^{\rm num}_{\mathsf{d}}(\mathsf{Q})$
are non-negative  \cite{MR2956038} integers defined by
\beq
\mathrm{DT}^{\rm num}_{\mathsf{d}}(\mathsf{Q}) \coloneqq \sum_{i \in \bbZ} (-1)^i \mathrm{DT}_{\mathsf{d},i}(\mathsf{Q}) \,.
\label{eq:DTnum}
\eeq

\subsubsection{BPS invariants}
For a Looijenga orbi-pair $Y(D)$, we define open BPS numbers as in \cite[Equation (1.21)]{BBvG2} by
\beq
 \Omega_{d}(Y(D))(q) \coloneqq  [1]_{q}^2 \left( \prod_{i=1}^{l} \frac{1}{[ d \cdot D_i]_{q}} \right)
\sum_{k | d}
 \frac{(-1)^{ d/k \cdot D + l} \mu(k)}{[k]_{q}^{2-l} k^{2-l}} \,
\mathsf{N}_{d/k}^{\rm log}(Y(D))(-\ri k \log q)\,.
\label{eq:Omegad}
\eeq
%
%
We will also denote just by $\Omega_{d}(Y(D))$ the genus-zero limit $\Omega_{d}(Y(D))(1)$. 

The log-open correspondence of \cref{thm:logopen} below implies that, for Examples I--III,
\beq
 \Omega_{d}(Y(D))(q) =
[1]_q^{2} \prod_{i=1}^{l-1} \frac{r_i (d\cdot D_i)}{[d\cdot D_i]_q}  \sum_{k | d}  \frac{\mu(k) (-1)^{\sum_{i=1}^{l-1} d/k\cdot D_i (r_i+1)}}{k}
\mathsf{O}_{\iota^{-1}(d/k)}(Y^{\rm op}(D))(-\ri k \log q)\,,
\label{eq:Omegad2}
\eeq 
where $f_i=p_i/r_i$ with $(p_i, r_i)=1$ is the framing of the $i^{\rm th}$ orbifold Aganagic--Vafa Lagrangian in $Y^{\rm op}(D)$. Even though $\Omega_d(q)$ can at most be expected to be a rational function of $q^{1/2}$, heuristically, and for smooth, integrally framed $Y^{\rm op}(D)$ \cites{Labastida:2000yw, Labastida:2000zp, Ooguri:1999bv,Marino:2001re}, $\Omega_d(Y(D))(q)$ has an interpretation as generating function of BPS domain walls counts in a type IIA string compactification on $Y^{\rm op}(D)$, with its coefficient computing degeneracies of D2-branes with fixed spin and charge ending on a D4-brane wrapped around the Lagrangians $Y^{\rm op}(D)$. The formula \eqref{eq:Omegad2} generalises \cite[Eq.~2.10]{Marino:2001re} to the orbifold setting, with an additional factor keeping track of the fractional framing of the branes.

\subsection{The correspondences}

In our previous paper \cite{BBvG2} we proposed that the invariants of the previous Section are related through a series of geometric correspondences.
A conceptual explanation of these was provided in \cite[Section 1.4]{BBvG2}, and we briefly recall it in \cref{sec:motivation}.

\subsubsection{Numerical log-local}

Our first result is the following

\begin{thm}\label{thm:log-local}
Assume that $Y(D)$ is one of $\exI$, $\exII$ or $\exIII$. Then
\beq
N^{\rm loc}_d(Y(D))=\left( \prod_{j=1}^l \frac{(-1)^{d\cdot D_j -1}}{d\cdot D_j} \right) N^{\rm log}_d(Y(D))\,.
\label{eq:logloc}
\eeq
\label{thm:logloc}
\end{thm}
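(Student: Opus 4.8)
The plan is to prove \eqref{eq:logloc} by iterating the genus-zero log--local correspondence for a single smooth nef divisor, peeling off the boundary components $D_1,\dots,D_l$ one at a time, in the spirit of the proof of the log--local correspondence for nef Looijenga pairs in \cite{BBvG2}; the new feature is that several of the $D_j$ are only $\bbQ$-Cartier and pass through the cyclic quotient singularities $\mu_a,\mu_b$ of $\cY$. First I would realise the local Calabi--Yau $\mathrm{Tot}\big(\bigoplus_{j=1}^{l}\cO_{\cY}(-D_j)\big)$ as an iterated line bundle: with $W_0=\cY$ and $W_k=\mathrm{Tot}\big(\cO_{W_{k-1}}(-p_{k-1}^{*}D_k)\big)$ for $p_{k-1}\colon W_{k-1}\to\cY$ the projection, one has $W_k=\mathrm{Tot}\big(\bigoplus_{j=1}^{k}\cO_{\cY}(-D_j)\big)$ and $W_l=\mathrm{Tot}\big(\bigoplus_{j=1}^{l}\cO_{\cY}(-D_j)\big)$.

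The heart of the argument is one peeling step. Applying the van Garrel--Graber--Ruddat log--local correspondence \cite{MR3904449} to the smooth divisor $p_{k-1}^{*}D_k$ in $W_{k-1}$ --- which is smooth, nef, and meets every curve class $d$ lifted from $\cY$ in $d\cdot D_k$ --- deformation of $W_{k-1}$ to the normal cone of $p_{k-1}^{*}D_k$, the degeneration formula, and the ensuing rubber/$\psi$-class computation yield the genus-zero identity that the local invariant of $W_k$ equals $\tfrac{(-1)^{d\cdot D_k-1}}{d\cdot D_k}$ times the maximal-contact-along-$D_k$ log invariant of $W_{k-1}$, with the $l-1$ codimension-two insertions pulled back from $\cY$ carried along. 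Starting from $W_l$ and applying this for $k=l,l-1,\dots,1$ --- each step performed in the presence of the spectator log structure carried by the already-removed components, exactly as in the iterated structure of \cite{BBvG2} --- extracts the full product $\prod_{j=1}^{l}\tfrac{(-1)^{d\cdot D_j-1}}{d\cdot D_j}$ and terminates at the maximal-contact log invariant $N^{\rm log}_d(Y(D))$ of the base, which is \eqref{eq:logloc}. For a component that is Cartier and disjoint from the orbifold locus, such as a general $D_2\in|-K_Y-D_{(b,a)}|$ in Example I, the step is \emph{verbatim} the smooth-surface statement.

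\textbf{The main obstacle} lies in the peeling steps that remove a component passing through an orbifold point: $D_{(b,a)}$ in Examples I and II, and all three toric components in Example III. There one must verify that the van Garrel--Graber--Ruddat argument survives for a nef $\bbQ$-Cartier divisor through cyclic quotient singularities: that the deformation to the normal cone of, say, $D_{(b,a)}$ produces the expected log-smooth orbifold $\bbP^1$-bundle over $D_{(b,a)}$, that the resulting relative/rubber integrals receive contributions only from the age-$0$, untwisted sector of the inertia stack (cf.\ \cite{MR2450211}), so the combinatorics of the rubber theory is unchanged, and that the $\psi$-class identity producing the factor $(-1)^{d\cdot D-1}/(d\cdot D)$ is insensitive to the orbifold structure --- which is plausible given that, as noted after \cite[Construction 6.4]{BBvG2}, the associated toric threefold $X$ and Lagrangians $L$ remain smooth and only the framing becomes fractional. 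Granting this, \eqref{eq:logloc} follows; independently, it can be read off a posteriori by comparing the closed-form evaluations of $N^{\rm log}_d(Y(D))$ and $N^{\rm loc}_d(Y(D))$ established for Examples I--III in the rest of the paper.
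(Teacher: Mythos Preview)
Your proposal sketches a conceptual degeneration proof, but this is \emph{not} how the paper proves the theorem. The paper proceeds by brute-force closed-form evaluation of each side separately and then comparison: the local invariants $N^{\rm loc}_d$ are computed via the Coates--Givental--Tseng twisted $I$-function of $E_{Y(D)}$ (using that the mirror map is trivial in all three examples, and in Example~III a small-quantum-product vanishing to pass from $N^{\rm loc,\psi}_d$ to $N^{\rm loc}_d$), while the log invariants $N^{\rm log}_d$ are computed by multiplying quantum broken lines in the scattering diagram of a toric model and taking $q\to 1$. The match \eqref{eq:logloc} is then read off from the resulting explicit binomial expressions. Your final sentence --- ``it can be read off a posteriori by comparing the closed-form evaluations'' --- is in fact the entirety of the paper's argument, not an afterthought.

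As for the degeneration route you outline: you have correctly identified, and not resolved, the essential obstruction. The van Garrel--Graber--Ruddat result you invoke is stated for smooth varieties and smooth Cartier divisors; the components $D_{(b,a)}$ (Examples~I--II) and $D_{(-1,0)}$ (Example~III) are only $\bbQ$-Cartier and run through the $\mu_a,\mu_b$ points, so one would need an orbifold version of the degeneration to the normal cone, of the log/relative comparison, and of the rubber calculation, together with the check that no twisted-sector contributions appear. The paper's footnote remarks that such an extension ``can in principle'' be carried out, but it is not done here, and your plausibility argument about smooth $X$ and $L$ in $Y^{\rm op}(D)$ pertains to the open geometry, not to the degeneration on the log/local side. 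Separately, even in the smooth setting the ``peel one component at a time with spectator log structure'' step is more delicate than a direct iteration of \cite{MR3904449}: as the paper recalls, the naive cycle-level extension to normal crossings fails in general \cite{NR}, and the argument of \cite[\S 5]{BBvG2} you cite is specific to the $l=2$ log Calabi--Yau surface case with point insertions --- it is not an off-the-shelf induction valid for arbitrary $l$. So your proposal, as written, is a programme with an acknowledged gap rather than a proof; the paper sidesteps all of this by computing both sides directly.
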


Our proof of \cref{thm:logloc} follows from a stronger result, wherein we give a complete closed-form solution of both sides of \eqref{eq:logloc} in all degrees. In the case of an irreducible smooth nef divisor, the correspondence between genus 0 log and local GW invariants was proven in all dimensions at the cycle-level in \cite{vGGR}, with various extensions in \cites{BBvG1,BBvG2,GWZ, CGKT1,CGKT2,CGKT3,Bou19a,Bou19b,NR,FTY,tseng2020mirror,BNTY}.
The naive conjectural extension of this log-local correspondence at the cycle level for normal crossings divisors has been recently disproved \cite{NR, BNTY}. However, the numerical version of the log-local correspondence for normal crossing divisors seems to hold in a number of cases of great interest: for example this was proved for point insertions of orbifold toric pairs in \cite{BBvG1}, and for point invariants of log Calabi--Yau surfaces with nef $D_i$ in 
\cite{BBvG2}. \cref{thm:logloc} simultaneously provides a non-toric, orbifold version of the numerical version of the log-local correspondence of \cites{vGGR,BBvG1}.\footnote{As discussed in more details in 
\cite[\S 1.4]{BBvG2}, point insertions and the log Calabi--Yau condition are both crucial assumptions allowing us to obtain the numerical log-local correspondence despite the general negative results of \cite{NR, BNTY}. In \cite[\S 5]{BBvG2}, we gave a conceptual proof by degeneration of the numerical log-local correspondence for log Calabi-Yau surfaces with two boundary components. This completely general degeneration argument can
be easily extended to the orbifold setting in any dimension and can in
principle be used to determine when the numerical log-local correspondence holds and when correction terms are needed.}

\subsubsection{Log-open}\label{sec:logopen}

Our second result is an orbifold generalisation of the higher genus log-open principle of \cite[Conjecture 1.3]{BBvG2}. Following \cite[Definition 6.5]{BBvG2}, we canonically identify each curve degree $d\in\hhh_2(Y,\bbZ)$ with a relative curve degree $\iota^{-1}(d)$ in $Y^{\rm op}(D)$. We recall how this identification works and adapt it to the orbifold setting.
The class of a Riemann surface with boundary in $Y^{\rm (op)}(D)$ is decomposed as an $l$-tuple $(\beta,\alpha_1,\dots,\alpha_{l-1})$. Here $\beta$ is a 2-homology class, which decomposes as a sum of the homological 2-spheres corresponding to the compact toric 1-strata of $Y^{\rm op}(D)$ (the inner edges of the toric graph). The $\alpha_i$ are relative 2-homology classes corresponding to the outer edges with framing.
The morphisms
\beq 
K_{Y \setminus D_1 \cup \dots \cup D_{l-1}} \stackrel{\pi}{\longrightarrow}  Y \setminus D_1 \cup \dots \cup D_{l-1} \stackrel{i}{\longrightarrow}    Y\,,
\eeq 
with $\pi:K_{Y \setminus D_1 \cup \dots \cup D_{l-1}} \to Y \setminus D_1 \cup \dots \cup D_{l-1}$ the bundle projection and $i:Y \setminus D_1 \cup \dots \cup D_{l-1} \hookrightarrow Y$ the canonical open immersion, induce an injective homomorphism of 2-homology groups 
\beq\iota_{o}: \hhh_2(Y \setminus D_1 \cup \dots \cup D_{l-1}, \bbZ) \hookrightarrow \hhh_2(Y, \bbZ)\,.
\eeq
In fact, by sending the generators to the corresponding subvarieties, we identify $\iota_{o}$ with a morphism
\beq
\hhh_2(Y \setminus D_1 \cup \dots \cup D_{l-1}, \bbZ) \hookrightarrow {\rm A}_1(Y)\,.
\eeq
The edge of the toric graph with framing $p_i/r_i$ corresponds to a toric $(\bbA^1)_i$ in $K_{Y \setminus D_1 \cup \dots \cup D_{l-1}}$ that meets the $i^{\rm th}$ connected component $L_i$ of the toric Lagrangian $L$ in a non-trivial minimal $(S^1)_i\subset L_i$, i.e.~$[(S^1)_i]$ generates $\hhh_1(L_i, \bbZ)$. Moreover, $[(S^1)_i]$ corresponds to the relative homology class
\[
[({\rm disk} \subset (\bbA^1)_i,\partial \, {\rm disk}=(S^1)_i)] \in \hhh_2^{\rm rel}(Y^{\rm op}(D),L;\bbZ).
\]
The projection $\pi((\bbA^1)_i)$ is compactified to an orbifold $\bbP^1_{r_i}$ in $Y$, with the added point a cyclic quotient singularity in $Y$ with isotropy group $\bbZ/r_i\bbZ$, see \cite[Section 3.11.3]{MR3948935}. Furthermore, $r_i [\bbP^1_{r_i}]$ is a well-defined class in $\hhh_2(Y, \bbZ)$. 
Following \cite[Definition 6.5]{BBvG2}, we send $[(S^1)_i]$ to $[\bbP^1_{r_i}]$. The latter is not in $\hhh_2(Y,\bbZ)$, but it is in the Chow group ${\rm A}_1(Y)$. We will only be interested in winding numbers whose image lies in $\hhh_2(Y,\bbZ)$. In particular, in the log-open correspondence of \eqref{eq:log-open}, only winding numbers that are multiples of the $r_i$ of their respective framings can be compared with curves classes on the log side, which introduces the correction term in \eqref{eq:log-open} compared to \cite[Conjecture 1.3]{BBvG2}.
In summary, we extend $\iota_o$ to an injective map 
\beq 
\iota \colon \hhh_2(Y \setminus D_1 \cup \dots \cup D_{l-1}, \bbZ) \oplus \bigoplus_{i=1}^{l-1} \hhh_1\l(L_i, 
\bbZ\r) \hookrightarrow {\rm A}_1(Y)
\label{eq:iota}
\eeq 
by positing that
$\iota : [(S^1)_i] \mapsto [\bbP^1_{r_i}]$ for $i=1, \dots, l-1$, and we will only be interested in curve classes $d\in\hhh_2(Y,\bbZ)$ that admit an inverse $\iota^{-1}(d)$.

\begin{thm}
For each of $Y(D)=\exI$, $\exII$ or $\exIII$, writing $f_j=\frac{p_j}{r_j}$ with $(p_j,r_j)=1$ for the framing of the $j^{\rm th}$ Aganagic--Vafa orbi-brane in $Y^{\rm op}(D)$, we obtain
\beq\label{eq:log-open} 
 \mathsf{O}_{\iota^{-1}(d)}(Y^{\rm op}(D)) (-\ri \log q)
 = 
 [1]_q^{l-2} \,
  \frac{(-1)^{d \cdot D_l-1}}{[d \cdot D_l]_q} \,
  \prod_{j=1}^{l-1} \frac{(-1)^{r_j (d \cdot D_j)-1}}{r_j (d \cdot D_j)} \,
 \mathsf{N}_{d}^{\rm log}(Y(D)) (-\ri \log q)\,,
\eeq
as well as closed-form expressions of the invariants. The correction factor $r_j(d\cdot D_j)$ is the winding number around $L_j$.
\label{thm:logopen}
\end{thm}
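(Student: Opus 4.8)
The plan is to prove \cref{thm:logopen} by producing, for each of the three families $Y(D)=\exI,\exII,\exIII$, explicit closed-form evaluations of \emph{both} sides of \eqref{eq:log-open}, after which the asserted identity collapses to an elementary comparison of finite products of $q$-numbers (this is also how the last clause of the theorem, the closed-form expressions themselves, gets proved). Since the surface underlying $\exII$ (resp.\ $\exIII$) is a toric blow-up of $\bbP(1,a,b)$ at a smooth point of $D_2$ (resp.\ of a toric boundary divisor), and the assignment $Y\mapsto Y^{\rm op}(D)$ of \cite[Construction 6.4]{BBvG2} turns this blow-up into a controlled modification of the toric $3$-fold $X$ and the Lagrangian configuration $(L,\mathsf{f})$ — essentially the insertion of one extra node into the toric graph — I would set up the computation first for Example I and then propagate it to Examples II and III.

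\emph{The log side.} First I would compute $\mathsf{N}^{\rm log}_d(Y(D))(-\ri\log q)$ in closed form. These surfaces are non-toric but only one interior blow-up away from toric, so the quantum scattering diagram of \cite{bousseau2018quantum} (whose output is polynomial in $q^{1/2}$ by \cite{MR3904449}, cf.\ \cite[Proposition 4.2]{BBvG2}) has a controlled wall structure: consistency is reached after finitely many scatterings, and the all-genus $\lambda_g$-twisted maximal-contact generating function is extracted from the single wall dual to the ray of $d$, with the orbifold points of orders $a,b$ reflected in the slopes and denominators of the relevant rays. This yields $\mathsf{N}^{\rm log}_d(Y(D))$ as an explicit product of $q$-numbers $[d\cdot D_j]_q$ times a sign and a power of $[1]_q$. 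As an independent check I would also degenerate $Y(D)$ to the normal cone of $D_l$ and combine the higher-genus log-local correspondence for a smooth divisor \cite{Bousseau:2020ckw} with \cref{thm:log-local}, reducing the genus-$0$ part to an orbifold local Gromov--Witten computation on $\mathrm{Tot}(\cO_{\cY}(-D_l))$.

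\emph{The open side.} Next I would compute $\mathsf{O}_{\iota^{-1}(d)}(Y^{\rm op}(D))(-\ri\log q)$ via the topological vertex. By \cite[Construction 6.4]{BBvG2} adapted to orbi-pairs, $(X,L)$ is a \emph{smooth} toric Calabi--Yau $3$-fold with smooth Aganagic--Vafa Lagrangians, the only orbifold feature being the fractional framings $f_j=p_j/r_j$. Using the relative-Gromov--Witten definition of fractionally framed open invariants of \cite{MR3948935} (equivalently the localisation definition of \cite{MR2861610}), $\mathsf{O}_{\beta;\vec\mu}$ is a sum over partitions on the edges of the toric graph of products of topological vertices, edge propagators, and framing factors, the factor on the $j$-th leg carrying the fractional shift $f_j$; for the strip-like geometries $X$ occurring in Examples I--III the partition sum telescopes by the hook-content and $q$-binomial identities into a closed form for the single-winding invariants. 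Crucially, the identification $\iota:[(S^1)_j]\mapsto[\bbP^1_{r_j}]$ of \cref{sec:logopen} forces the winding number around $L_j$ to equal $r_j(d\cdot D_j)$, and this is exactly where the correction factor $r_j(d\cdot D_j)$ and the sign $(-1)^{r_j(d\cdot D_j)-1}$ in \eqref{eq:log-open} originate — consistently with the string-theoretic formula \eqref{eq:Omegad2} that generalises \cite[Eq.~2.10]{Marino:2001re}.

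\emph{Matching, and the main obstacle.} With both closed forms in hand, \eqref{eq:log-open} reduces to checking an identity among finite products of $[m]_q$'s and signs, which I would verify directly, and the closed forms for the invariants themselves then fall out. I expect the main obstacle to be twofold. First, the log side: the scattering computation on a \emph{non-toric} orbifold surface requires care in handling the interior blow-up(s), in certifying that only finitely many walls contribute, and in propagating the all-genus $\lambda_g$-structure (equivalently, the $Y(D)\times\AA^1$ reinterpretation via \cite[Lemma 7]{MPT}). Second, the open side: pinning down the normalisation of the fractionally framed open invariants so that the winding-to-curve-class correction is \emph{exactly} $r_j(d\cdot D_j)$ with the stated sign — a matter of reconciling the conventions of \cite{MR3948935,MR2861610} with the identification \eqref{eq:iota}, and the one point where the orbifold structure genuinely deforms the smooth statement of \cite[Conjecture 1.3]{BBvG2}.
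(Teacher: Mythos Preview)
Your proposal is correct and follows essentially the same route as the paper: for each of Examples I--III the log invariants are computed via quantum broken lines in the scattering diagram (as in \cite[\S 4.2]{BBvG2}), the open invariants via the topological vertex \eqref{eq:vertex}--\eqref{eq:glueing} with hook-Schur and Cauchy-binomial identities, and \eqref{eq:log-open} then falls out from matching the two closed forms under the winding/curve-class identification $\iota$. One small correction: the scattering output is not literally a product of the $[d\cdot D_j]_q$ but rather a product of $q$-binomials such as $\qbinom{(a+b)d}{ad}_q$ (Example I), $\qbinom{ad_0}{d_0-d_1}_q\qbinom{(a+b-1)d_0+d_1}{ad_0}_q$ (Example II), and $\frac{[ad_0((b-1)d_0+d_1)]_q}{[1]_q}\qbinom{ad_0}{d_0-d_1}_q$ (Example III), with arguments built from the curve-class components and $(a,b)$ rather than just the intersection numbers.
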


In the genus zero limit ($q \to 1$) \cref{thm:logopen} recovers a version of the numerical log/local correspondence of \cref{thm:log-local}, with the genus zero open invariants equating the local invariants up to a factor:
\beq
\label{eq:loc-open} 
 O_{0;\iota^{-1}(d)}(Y^{\rm op}(D))
 =  \prod_{j=1}^{l-1} \frac{(-1)^{d\cdot D_j(r_j-1)}}{r_j}
 N_{d}^{\rm loc}(Y(D))\,.
\eeq
The additional normalisation factor as compared to the case of smooth varieties (where $r_j=1$), and especially the rescaling of the boundary circle classes by $r_j$ in the definition of $\iota$ are familiar in the relation of fractionally framed toric branes to enumerative invariants, and they match identically the correction factors relating open GW invariants of large $N$ Lagrangians of torus links from fractionally framed open GW invariants of toric orbi-branes; see \cites{Brini:2011wi,Diaconescu:2011xr,MR3948935,Aganagic:2013jpa}.

\subsubsection{KP/LMOV/DT integrality}

The next Theorem substantiates the expectation that \eqref{eq:Omegad}-\eqref{eq:Omegad2} are particular open BPS/LMOV partition functions, and in particular integral Laurent polynomials in $q^{1/2}$.
\begin{thm}
Let $Y(D)$ be any of $\exI$, $\exII$ or $\exIII$. Then $\Omega_d(q)\in \bbZ[q^{\pm 1/2}]$.
\label{thm:openbps}
\end{thm}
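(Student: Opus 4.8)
The plan is to derive the integrality of $\Omega_d(q)$ as a formal consequence of the explicit formulas established for the other invariants, rather than attempting a direct argument on the log Gromov--Witten side. By \cref{thm:logopen}, each $\mathsf{N}^{\mathrm{log}}_{d}(Y(D))(-\ri\log q)$ is expressed in closed form, and substituting into the definition \eqref{eq:Omegad} rewrites $\Omega_d(q)$ in terms of the open invariants $\mathsf{O}_{\iota^{-1}(d)}(Y^{\mathrm{op}}(D))$ exactly as in \eqref{eq:Omegad2}. So the first step is purely bookkeeping: reduce the statement to showing that the right-hand side of \eqref{eq:Omegad2} is a Laurent polynomial in $q^{1/2}$ with integer coefficients, i.e.\ that the stated formula for $\Omega_d$ is, after the Möbius-sum and the normalisation by $[1]_q^2\prod_i r_i(d\cdot D_i)/[d\cdot D_i]_q$, the open LMOV/BPS partition function of the fractionally framed toric brane geometry $Y^{\mathrm{op}}(D)$.

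The second step is to identify that open BPS partition function with a quiver DT generating series. Here I would invoke the symmetric-quiver description attached to $(Y,D)$: using the closed-form expressions for $\mathsf{O}_{\iota^{-1}(d)}$ one exhibits a symmetric quiver $\mathsf{Q}$ (depending on $Y(D)$, and in the three families built explicitly from the combinatorics of the toric graph with its fractional framings) such that $\Omega_d(q)$ is, up to the explicit sign and the $r_i$-rescaling of winding numbers, a numerical DT invariant $\mathrm{DT}^{\mathrm{num}}_{\mathsf{d}}(\mathsf{Q})$ — or a finite $\ZZ$-linear combination of such — for the dimension vector $\mathsf{d}$ corresponding to $d$ under $\iota$. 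Concretely one checks that the right-hand side of \eqref{eq:DTmot_intro} matches the generating function assembled from the $\mathsf{O}_{\beta}$'s after the change of variables, exactly as in the smooth case of \cite{BBvG2}; the fractional framing only enters through the $r_i$-dilation of the curve/winding lattice, which is absorbed into the choice of $\mathsf{d}\mapsto d$ and does not affect symmetry of the Euler form. Once this match is in place, integrality is immediate: by Efimov's theorem \cite{MR2956038} (cited in the excerpt as \eqref{eq:DTnum}) the $\mathrm{DT}^{\mathrm{num}}_{\mathsf{d}}(\mathsf{Q})$ are integers, hence so is each coefficient of $\Omega_d(q)$, and polynomiality in $q^{1/2}$ follows from the polynomiality of $\mathsf{N}^{\mathrm{log}}_d(Y(D))(q)$ in $q^{1/2}$ recorded after \eqref{eq_log}.

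In practice this will be organised family by family (Examples I, II, III), since the quiver $\mathsf{Q}$ and the precise identification $\mathsf{d}\leftrightarrow d$ differ in each case; but the mechanism is uniform. I expect the main obstacle to be the second step: producing the symmetric quiver and verifying the DT/open-GW identification in the orbifold setting. The subtlety is that the fractional framings $f_i=p_i/r_i$ rescale the boundary circle classes by $r_i$, so the naive generating variable $x_i$ for winding around $L_i$ must be replaced by one adapted to the sublattice $r_i\hhh_1(L_i,\ZZ)$ in order for the exponents in \eqref{eq:DTmot_intro} to come out integral and the Euler form symmetric; checking that this rescaling is consistent with the closed forms from \cref{thm:logopen}, and that no genuinely new (non-quiver) contributions appear, is where the real work lies. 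Everything else — the Möbius inversion, the sign tracking $(-1)^{\sum_i d/k\cdot D_i(r_i+1)}$, the cancellation of the $[d\cdot D_i]_q$ denominators against the $q$-Pochhammer factors — is the same formal manipulation as in \cite[Section 7]{BBvG2}, extended verbatim.
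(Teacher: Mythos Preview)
Your approach has a genuine gap. The quiver route you propose is only available in a strict subset of the cases covered by the theorem: as the paper itself records in \cref{thm:kpdt}, a symmetric quiver $\mathsf{Q}(Y(D))$ with the required DT/open-GW match is produced only for $l=2$ and when the Aganagic--Vafa brane in $Y^{\rm op}(D)$ is \emph{integrally} framed, i.e.\ for $\exI$ or $\exII$ with $a=1$ or $b=1$. For generic coprime $(a,b)$ the framing $b/a$ (resp.\ $b/a-1$) is genuinely fractional and the strips--quiver correspondence of \cite{Panfil:2018sis} does not apply; for $\exIII$ one has $l=3$ and two Lagrangians, outside the single-brane strip setup altogether. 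Your hoped-for lattice rescaling by $r_i$ does not manufacture a symmetric integral Euler form in general, so there is no candidate quiver to feed into Efimov's theorem. There is also a mismatch of refinement: $\mathrm{DT}^{\rm num}_{\mathsf{d}}(\mathsf{Q})$ is a number, not a Laurent polynomial in $q^{1/2}$, so even where \cref{thm:kpdt} holds it only controls $\Omega_d(1)$, not $\Omega_d(q)$.

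Separately, your final sentence is incorrect: polynomiality of $\Omega_d(q)$ does \emph{not} follow from polynomiality of $\mathsf{N}^{\rm log}_d(q)$, because the definition \eqref{eq:Omegad} divides by $\prod_i [d\cdot D_i]_q$, introducing potential poles at roots of unity. Showing that these poles cancel is exactly where the content lies, and this is what the paper does directly. Using the explicit $q$-binomial formulas \eqref{eq:NdlogI}, \eqref{eq:NdlogII}, \eqref{eq:NdlogIII}, it observes that each summand of the M\"obius sum in \eqref{eq:Omegad3} lies in $\bbZ[q^{\pm 1/2}]$, so $\Omega_d(q)\in\bbQ(q^{1/2})$ with at most double poles at $\hat d^{\rm th}$ roots of unity; it then proves (by the pairing-and-cancellation argument of \cite[Thm~8.1 and Lemma~8.3]{BBvG2}) that the sum vanishes to second order at each such root, giving $\Omega_d(q)\in\bbQ[q^{\pm 1/2}]$. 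Integrality follows from the elementary fact $1/[n]_q\in q^{-n/2}\bbZ[[q]]$, which forces $\Omega_d(q)\in\bbZ[q^{-1/2}][[q^{1/2}]]\cap\bbQ[q^{\pm 1/2}]=\bbZ[q^{\pm 1/2}]$. No quiver is invoked.
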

In the genus zero limit, the combination of \cref{thm:logloc,thm:logopen} gives
\bea
\Omega_{d}(Y(D))
& = & \frac{1}{\prod_{i=1}^{l} (d \cdot D_i)}
 \sum_{k | d}(-1)^{  d/k \cdot D + l} \frac{\mu(k)}{k^{4-2l}}
N_{d/k}^{\rm log}(Y(D)) \nn \\
& = & 
\label{eq:Omega0d}
\sum_{k | d} \frac{\mu(k)}{k^{4-l}} \prod_{j=1}^{l-1} r_j (-1)^{d/k \cdot D_j (r_j+1)}
O_{0;\iota^{-1}(d/k)}(Y^{\rm op}(D))  \\
& = & 
\sum_{k | d} \frac{ \mu(k)}{k^{4-l}}
N_{d/k}^{\rm loc}(Y(D))\,. 
\label{eq:KP}
\eea 
It follows directly from \cref{thm:openbps} that $\Omega_d(Y(D)) \in \bbZ$. In particular, \eqref{eq:KP} implies that the unrefined BPS invariants $\Omega_d(Y(D))$ coincide with an orbifold generalisation of the conjecturally integral  invariants $\mathrm{KP}_d(E_{Y(D)})$ of the local orbifold surfaces $\exI$, $\exII$ and $\exIII$ introduced by Klemm--Pandharipande for $l=2$ in \cite{Klemm:2007in} and by Ionel--Parker in \cite{MR3739228}  for smooth varieties. \cref{thm:openbps} implies then immediately their integrality in the generalised orbifold context of this paper.

Finally, when $l=2$ and the Lagrangians of $Y^{\rm op}(D)$ are integrally framed, the integrality statement above is also a consequence of the following statement, which is a direct consequence of the strips-quiver correspondence of \cite{Panfil:2018sis} (see in particular \cite[Theorem~7.3]{BBvG2})

\begin{thm}
Let $Y(D)=\exI$ or $Y(D)=\exII$ with either $a=1$ or $b=1$. Then there exists a symmetric quiver $\mathsf{Q}(Y(D))$ with $\chi(Y)-1$ vertices and a lattice isomorphism $\kappa: \bbZ (\mathsf{Q}(Y(D)))_0 \stackrel{\sim}{\rightarrow}  \hhh_2(Y,\bbZ) $ such that
\beq
\mathrm{DT}_{d}^{\rm num}(\mathsf{Q}(Y(D))) =
\Big|\Omega_{\kappa(d)}(Y(D))+ \sum_i \a_i \delta_{d,v_i}\Big| \,,
\label{eq:kpdt}
\eeq
with $\a_i \in \{0,1\}$. In particular, $\Omega_{d}(Y(D))\in \bbZ$.
\label{thm:kpdt}
\end{thm}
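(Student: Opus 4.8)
The plan is to reduce the statement to the strips-quiver correspondence of Panfil--Sułkowski, exactly as in \cite[Theorem~7.3]{BBvG2}, after first identifying the relevant open Gromov--Witten generating functions with partition functions of strip geometries. By \cref{thm:logopen} (specialized to $l=2$ and integral framing, i.e.\ all $r_j=1$), the all-genus log invariants $\mathsf{N}^{\rm log}_d(Y(D))$ are, up to an explicit monomial prefactor, the open GW generating functions $\mathsf{O}_{\iota^{-1}(d)}(Y^{\rm op}(D))$ of the Aganagic--Vafa brane $Y^{\rm op}(D)=(X,L,\mathsf{f})$. First I would verify that, under the hypothesis $a=1$ or $b=1$, the toric Calabi--Yau threefold $X$ underlying $Y^{\rm op}(\exI)$ or $Y^{\rm op}(\exII)$ is (a resolution of) a \emph{strip geometry} in the sense of \cite{Li:2004uf,Panfil:2018sis}: its toric diagram is a linear chain of vertices, and the brane $L$ is supported on one of the internal or external legs. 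This is where the restriction $a=1$ or $b=1$ enters --- it is precisely the condition guaranteeing that the fan of $\bbP(1,a,b)$ (resp.\ its one-point blow-up) produces, after passing to $K_{Y\setminus D_1}$ and taking the toric limit dictated by the framing, a strip rather than a more general trivalent tree; I would check this by direct inspection of the fan in Figure~\ref{fan:P1ab} and of the blow-up in Example~II.

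Next I would invoke the strips-quiver correspondence \cite{Panfil:2018sis}: to any strip geometry with a brane it assigns a symmetric quiver $\mathsf{Q}$ whose motivic DT partition function \eqref{eq:DTmot_intro} reproduces the (reduced, normalized) open GW partition function $\mathsf{O}_\beta$. Concretely, the open GW free energy of a strip, resummed over windings, takes the form of a product of quantum dilogarithms $\prod (\mathsf{x}^{\mathsf{d}} q^{\bullet}; q)_\infty^{\pm 1}$, and matching this against the right-hand side of \eqref{eq:DTmot_intro} determines the number of vertices, the arrows, and hence the Euler matrix $E_{\mathsf{Q}}$; the number of vertices comes out to be the number of compact curve classes available, which here is $\chi(Y)-1$ (three for $\exI$, four for the blow-up $\exII$), and the lattice isomorphism $\kappa$ is read off from the identification of $\mathsf{x}$-variables with curve classes in $\hhh_2(Y,\bbZ)$ via $\iota$. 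Combining this with the definition \eqref{eq:Omegad} of $\Omega_d$ and the DT-LMOV dictionary (the operation relating $\mathrm{DT}^{\rm num}_{\mathsf{d}}$ to BPS numbers, which is the plethystic/Möbius inversion already built into \eqref{eq:Omegad}), I would extract \eqref{eq:kpdt}: the numerical DT invariant equals $|\Omega_{\kappa(d)}|$, up to the ambiguity $\sum_i\a_i\delta_{d,v_i}$ with $\a_i\in\{0,1\}$ coming from the well-known discrepancy between the ``reduced'' normalization natural on the quiver side and the brane-partition-function normalization (the single-vertex, $\mathsf{d}=v_i$ contributions, where a shift by a framing-dependent $0$ or $1$ appears). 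Integrality of $\Omega_d$ is then immediate since $\mathrm{DT}^{\rm num}_{\mathsf{d}}(\mathsf{Q})\in\bbZ_{\geq 0}$ by \cite{MR2956038}.

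The main obstacle I expect is the first step: pinning down that the geometry $Y^{\rm op}(D)$ genuinely is a strip in the precise combinatorial sense required by \cite{Panfil:2018sis} when $a=1$ or $b=1$, and computing the associated quiver and the prefactor corrections explicitly enough to see that the only residual ambiguity is a $0/1$ shift on the diagonal classes $v_i$. This requires carefully tracking the fractional framing $\mathsf{f}$ through \cite[Construction~6.4]{BBvG2} --- even though $X$ and $L$ are smooth here, the framing is rationally shifted, and one must confirm that for $a=1$ or $b=1$ the relevant brane leg carries an \emph{integral} framing (so that the honest strips-quiver correspondence, rather than an orbifold refinement of it, applies), which is the content of the hypothesis and should follow from the computation of the framings $f_i=p_i/r_i$ in Examples~I--II. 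A secondary, more bookkeeping-type difficulty is matching signs and the precise form of the Euler form $E_{\mathsf{Q}}$ against the closed-form expressions for $\mathsf{N}^{\rm log}_d$ obtained in the proof of \cref{thm:logopen}; but this is routine once the strip structure is in hand, and in fact \eqref{eq:kpdt} can be taken as a consistency check on those closed forms.
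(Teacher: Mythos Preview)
Your approach is essentially identical to the paper's: reduce to the strips--quiver correspondence of \cite{Panfil:2018sis} via the argument of \cite[Theorem~7.3]{BBvG2}, after checking that $Y^{\rm op}(D)$ is an integrally framed strip geometry. One small correction worth flagging: the underlying threefold $X$ is always $\bbC^3$ (for $\exI$) or the resolved conifold (for $\exII$) regardless of $(a,b)$, hence always a strip --- the hypothesis $a=1$ or $b=1$ enters \emph{only} to make the framing $f=b/a$ (resp.\ $f=b/a-1$) integral, which is exactly what is needed to invoke \cite{Panfil:2018sis} without an orbifold modification; your vertex counts are also off (the quivers have one and two vertices for $\exI$ and $\exII$ respectively), but this is inessential to the argument.
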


\subsubsection{Geometric motivation}
\label{sec:motivation}

In the smooth case, the rational underpinning of the web of correspondences of the previous Section was described in \cite[Section 1.4]{BBvG2}; we recall it briefly here. 
Let $Y(D=D_1+\cdots+D_l)$ be a log smooth log Calabi--Yau surface and let $d$ be a curve class such that $d\cdot D_l>0$. If $D_l$ is nef, or more generally if $d$ is $D_l$-convex \cite[Section 1.4.1]{BBvG2}, then by the main result of \cite{vGGR} the genus 0 log Gromov--Witten (GW) theory of maximal tangency of $(Y,D_l)$ is equivalent to the genus 0 local Gromov--Witten theory of ${\rm Tot} \l(\shO(-D_l) \to S\r)$. This correspondence extends to adding maximal tangency conditions along the divisors $D_j$ on $S$, resp.~$\shO(-D_l)|_{D_j}$ on ${\rm Tot} \l(\shO(-D_l) \to S\r)$: in other words, there is a duality between imposing a maximal tangency condition along $D_l$ and twisting by $\shO(-D_l)$. This insight forms the basis of many subsequent works \cites{BBvG1,BBvG2,GWZ, CGKT1,CGKT2,CGKT3,Bou19a,Bou19b,NR,FTY,tseng2020mirror,BNTY}. \\
An important question is the extent to which the above generalises to simple normal crossings divisors,
particularly in light of a counter-example for the log-local correspondence which was given in the log Fano case in \cite{NR}. On the other hand there is  growing evidence that the equivalence could persist in the log Calabi--Yau case in the stationary sector, see \cite[Theorem 3.4]{BBvG1}, \cite[Theorem 5.1]{BBvG2}, and the geometric argument of \cite[Section~5]{BBvG2} which is further amenable to treating the log-local correspondence for log smooth log Calabi--Yau varieties in any dimension. \cref{thm:log-local} further corroborates this expectation in the orbifold context. \medskip

The log-open correspondence of \cref{sec:logopen} stems from one of the realisations of \cite{BBvG2}, whereby a maximal tangency condition along $D_j$ is proposed to be heuristically replaceable by an open condition along a Lagrangian $L_j$ near $D_j$. For log~CY surfaces $(Y, D_1+\dots + D_l)$ this entails a precise correspondence at the level of logarithmic and open invariants, as we explain in \cite[Section~1.4.2]{BBvG2}: one  first employs the log-local correspondence above for the a single irreducible component $D_l$ to twist $Y$ by $\cO(-D_l)$, and then consider Lagrangians $L_j$ near $\shO(-D_l)|_{D_j}$, $j=1,\dots,l-1$. It is shown in \cite{BBvG2} that under suitable conditions these Lagrangians are singular Harvey--Lawson (Aganagic--Vafa) branes, for which a rigorous construction of the open invariants exists \cite{Li:2004uf}. The arguments of \cite[Section~1.4.2 and Construction~6.4]{BBvG2}, adapted to the orbifold context, lead then to the log-open relations \eqref{eq:log-open} and \eqref{eq:loc-open} above.\medskip



Finally, parallel to similar expectations for other enumerative theories, underlying the all genus Gromov--Witten invariants of $Y(D)$ are the BPS invariants defined by \eqref{eq:Omegad}. These are conjectured to be integer-valued Laurent polynomials in $q^{1/2}$, which we prove for Examples I-III in \cref{thm:openbps}. When $l=2$,
it was proposed in \cite[Section~1.4.3]{BBvG2} that the unrefined ($q\to1$) limit of these BPS invariants should recover, up to signs, the Donaldson--Thomas invariants of a symmetric quiver. This is suggested by the conjectured relation of Gopakumar--Vafa invariants with sheaf-counting  theories on 4-folds \cite{cao2019stable, cao2020stable} and their connection, for local surfaces, to moduli of quiver representations highlighted in \cite[Section~1.4.3]{BBvG2}, and simultaneously by  a joint use of the `branes-quivers' correspondence of \cite{Kucharski:2017ogk,Ekholm:2018eee} and the log-open correspondence above. It was proposed in \cite[Section~1.5.4]{BBvG2} that the same chain of ideas may apply in the orbifold context too: \cref{thm:kpdt} indeed establishes this expectation for the $l=2$ examples of this paper.

\subsection{The techniques}

\subsubsection{Scattering diagrams}
\label{sec:scattdiags}

Our main tool for the computations of $\mathsf{N}^{\rm log}_d(Y(D))$ are multiplications of quantum broken lines in the quantum scattering diagrams of \cites{mandel2015scattering, bousseau2018quantum, MR4048291, davison2019strong}.
In the classical limit, this is treated in \cites{GPS,GHKlog,Gro11} in dimension 2 and in full generality in \cites{GS11,GHS16}.
The quantum scattering diagram associated to $Y(D)$ consists of an affine integral manifold $B$ and a collection of walls $\mathfrak{d}$ with wall-crossing functions $f_{\mathfrak{d}}$.
Here we content ourselves to give a brief overview, referring to \cite[Section 4.2]{BBvG2} for details  of the construction, the wall-crossing algorithm, broken lines and their multiplication. In particular, \cite[Proposition 4.2]{BBvG2} explains how to extract the $\mathsf{N}^{\rm log}_d(Y(D))$ from structural coefficients of the multiplication of theta functions.

As a topological manifold, $B$ is homeomorphic to $\bbR^2$. It comes with some distinguished integral rays $\rho_1,\dots,\rho_{l+r}$ emanating from the origin. Up to reordering, $\rho_1,\dots,\rho_{l}$ will correspond to $D_1,\dots,D_l$. The collection of all rays forms a fan with associated toric variety $\overline{Y}(\overline{D}=D_{\rho_1}+\dots+D_{\rho_{l+r}})$ for the toric prime divisors $D_{\rho_1},\dots,D_{\rho_{l+r}}$ of $\ol{Y}$. Some of these rays, say $\rho_{j_1},\dots,\rho_{j_s}$ with possible repetitions, have focus-focus singularities on them. For our purposes, we perturb these away from their rays, which simply means the creation of lines parallel to the rays carrying the focus-focus singularities. These walls are decorated with wall-crossing functions. When two walls meet, there is scattering resulting in the creation of new walls carrying wall-crossing functions themselves. For our examples, there will only be ``simple'' scattering.

For each focus-focus singularity on a line parallel to $\rho_{j_i}$, we blow up a smooth point of $D_{\rho_{j_i}}$. Taken together, this yields a \emph{toric model}
\beq
\pi : (\widetilde{Y},\widetilde{D}) \longrightarrow (\ol{Y},\ol{D})\,.
\eeq
This means that there is a birational map 
\beq
\varphi : (\widetilde{Y},\widetilde{D}) \longrightarrow (Y,D)
\eeq
that is a sequence of blow ups at codimension 2 strata of the boundary. 
By \cite{AW}, $(\widetilde{Y},\widetilde{D})$ and $(Y,D)$ have the same log GW theory.

Then, $B$ has integral asymptotic directions that correspond to weighted blow ups of $(\overline{Y},\overline{D})$. Theta functions (which are sections of an ample line bundle on the mirror family) correspond to asymptotic directions. Their values on open subsets of the mirror corresponding to the chambers of the scattering diagram are given by the sums of the end-coefficients of broken lines coming from the corresponding asymptotic directions. The broken lines can bend when crossing walls picking up contributions from the wall-crossing functions.

Multiplying broken lines together corresponds to creating tropical curves with the correct weights (=intersection multiplicities) with a selection of (possibly weighted blow ups of) boundary divisors. The balancing condition has to be satisfied at each vertex except at the focus-focus singularities, which are seen as sources of Maslov index 0 disks. These tropical curves furthermore are weighted by contributions coming from the wall-crossing. Summing the weights of the tropical curves then calculates the $\mathsf{N}^{\rm log}_d(Y(D))$ as described in \cite[Proposition 4.2]{BBvG2}.

\subsubsection{Local mirror symmetry}
\label{sec:localtech}

Since $Y$ is a projective toric surface for all of Examples~I--III, we can avail ourselves of Givental-type mirror theorems to determine \eqref{eq:Nloc}-\eqref{eq:Nlocpsi}. Let $T \simeq (\bbC^\star)^l$ be the torus action on $E_{Y(D)}$ covering the trivial action on the zero section $\iota:Y \hookrightarrow E_{Y(D)}$. Fix $\{\varphi_\a\}_{\a=0}^{\chi(Y)-1} $ a $\hhh(BT)$-basis of $\hhh_T(E_{Y(D)})$ given by lifts to $T$-equivariant cohomology of classes $\phi_\a\in \hhh(Y)$ with $\deg \phi_\a \leq \deg\phi_{\a+1}$, and
for $\theta, \chi \in \hhh_T(E_{Y(D)})$ denote $\eta_{Y(D)}(\theta,\chi)$ the $T$-equivariant Poincar\'e pairing on $E_{Y(D)}$,
\beq
\eta_{Y(D)}(\theta,\chi) \coloneqq \int_Y \frac{\iota^* \theta \cup \iota^*\chi}{\cup_{m=1}^l \re_T(\cO_Y(-D_m))}\,.
\eeq 
In terms of the small $T$-equivariant $J$-function  of $E_{Y(D)}$, 
\beq
J_{\rm small}^{Y(D)}(t, z) \coloneqq z  \re^{\sum_{i=1}^{\rho(Y)} t_i \varphi_i/z}\l(1+\sum_{d\in
  \mathrm{NE}(Y)}\sum_{\a,\b} \eta_{Y(D)}^{-1}(\varphi_\a, \varphi_\b) \re^{t\cdot d} \bra \frac{\varphi_\a}{z(z-\psi_1)}
\ket_{0,1,d}^{E_{Y(D)}} \varphi_\b\r)\,,
\label{eq:Jsmall}
\eeq
\eqref{eq:Nlocpsi} is given by 
\beq
N^{\rm loc,\psi}_d(Y(D))= [z^{l-1} \re^{t \cdot d}] \eta_{Y(D)}\l(\mathrm{pt}, J_{\rm small}^{Y(D)}(t,z)\r),
\label{eq:NlocpsiJ}
\eeq 
where for a ring $R$ and $f(x_1, \dots, x_n)=\sum_{i_1,\dots, i_n\geq 0} c_{i_1\dots i_n} x_1^{i_1}\dots x_n^{i_n} \in R[[x_1, \dots, x_n]]$ a formal power series with $R$-coefficients, we write $[\prod_{l=1}^n x_{l}^{i_l}]f(x)$ for the formal Taylor coefficient $c_{i_1, \dots, i_n}$ in $(x_1, \dots, x_n)$, and 
we employed the usual correlator notation for GW invariants,
\beq
\bra \tau_1 \psi_1^{k_1}, \dots, \tau_n \psi^{k_n}_n\ket_{0,n,d}^{E_{Y(D)}}:=\int_{[\ol{\mmm}_{0,m}(E_{Y(D)},d)]^{\rm vir}} \prod_i \ev^*_i(\tau_i) \psi_i^{k_i}.
\eeq
We compute the r.h.s. of \eqref{eq:NlocpsiJ} using the Coates--Givental--Tseng twist \cites{MR1653024,MR1408320, MR2510741,MR2276766,MR2578300} at the $J$-function level, equating $J_{\rm small}^{Y(D)}(t,z)$, up to a mirror map $t\mapsto t(y)$, to an explicit generalised hypergeometric series $I^{Y(D)}(y,z)$, which is in turn read off from the fan of $E_{Y(D)}$ \cites{MR1653024,MR2510741,MR3414388,MR3412343}. In all of Examples~I--III we will have $t=\log y$, and thus
\beq 
N^{\rm loc,\psi}_d(Y(D))= [z^{l-1} y^d] \eta_{Y(D)}\l(\mathrm{pt}, I^{Y(D)}(y,z)\r).
\eeq 
To compute \eqref{eq:Nloc}, we use part of a reconstruction theorem due to Boris Dubrovin \cite[Lecture~6]{Dubrovin:1994hc}, combined with the vanishing of quantum corrections to certain products in quantum cohomology. Recall that the components of the big $J$-function,
\beq
J_{\rm big}^{Y(D)}(\tau, z) \coloneqq z+\tau+\sum_{\mathsf{d}\in
  \mathrm{NE}(Y)}
  \sum_{n\in \bbZ^+}\frac{1}{n!}
  \sum_{\a,\b} \eta_{Y(D)}^{-1}(\varphi_\a, \varphi_\b)
  \bra \tau, \dots, \tau, 
    \frac{\varphi_\a}{z-\psi_1}
\ket_{0,n+1,d}^{E_{Y(D)}} \varphi_\beta\,,
\eeq
form a basis of flat co-ordinates for the Dubrovin connection,
\beq 
z \nabla_{\theta}\nabla_{\chi} J_{\rm big}^{Y(D)}(\tau, z) = \nabla_{\theta \star_\tau \chi} J^{Y(D)}_{\rm big}(\tau, z)\,,
\eeq 
with $\tau \in \hhh_T(E_{Y(D)})$ and $\star_\tau$ the big quantum cohomology product, the restriction to small quantum cohomology being $\tau \to \sum_{i=1}^{\rho(Y)} t_i \varphi_i$. Suppose now that there exist numbers $\varpi^{Y(D)}_{ij}$, $i,j=1,\dots, \rho(Y)$ such that 
\beq
\sum_{i,j=1}^{\rho(Y)} \varpi^{Y(D)}_{ij} \varphi_i \star_\tau \varphi_j\Bigg|_{\tau \to \sum_{i=1}^{\rho(Y)} t_i \varphi_i} = \sum_{i,j=1}^{\rho(Y)} \varpi^{Y(D)}_{ij} \varphi_i \cup \varphi_j = \mathrm{pt}.
\eeq 
Then,
\beq 
z \sum_{i,j=1}^{\rho(Y)} \varpi^{Y(D)}_{ij} \partial^2_{t_i t_j} J_{\rm small}^{Y(D)}(t, z) = \nabla_{\mathrm{pt}} J^{Y(D)}_{\rm big}(\tau, z) \Bigg|_{\tau \to \sum_{i=1}^{\rho(Y)} t_i \varphi_i},
\eeq 
from which we deduce
\bea
N^{\rm loc}_d(Y(D)) &=& [z^{l-1} y^d] \eta_{Y(D)}\l(\mathrm{pt}, \sum_{i,j=1}^{\rho(Y)} \varpi^{Y(D)}_{ij} q_i q_j \partial^2_{q_i q_j} I^{Y(D)}(y,z)\r)\,, \nn \\
&=&  \l(\sum_{i,j=1}^{\rho(Y)} \varpi^{Y(D)}_{ij} d_i d_j\r) N^{\rm loc, \psi}_d(Y(D)) \,.
\eea 

\subsubsection{The topological vertex}
The connected generating functions $\cO_{\vec \mu}(Y^{\rm op}(D))(Q, \hbar)$ of open GW invariants in the `winding-number basis' of \cite{Li:2004uf},
\bea
\cO_{\vec \mu}(Y^{\rm op}(D))(Q, \hbar) &\coloneqq & \sum_\beta \mathsf{O}_{\beta,\vec \mu}(Y^{\rm op}(D))(\hbar) Q^\beta \,,
\label{eq:openfr2}
\eea
can be reconstructed from the disconnected generating functions $\cZ_{\vec\mu}(Y^{\rm op}(D))$ and $\cW_{\vec\mu}(Y^{\rm op}(D))$ in the `winding number' and `representation' bases defined by
\bea
\exp\l[
\sum_{\vec\mu \in (\cP)^{l-1}}\cO_{\vec \mu}(Y^{\rm op}(D))(Q, \hbar) \mathsf{x}_{\vec \mu}\r]  &=:& \sum_{\vec\mu \in (\cP)^{l-1}}  \cZ_{\vec\mu}(Y^{\rm op}(D))(Q,\hbar)  \vec{\mathsf{x}}_{\vec \mu}, \nn \\
&=:& \sum_{\vec\mu , \vec \nu \in (\cP)^{l-1}} \prod_{i=1}^{l-1} \frac{\chi_{\nu_i}(\mu_i)}{z_{\mu_i}} \cW_{\vec\nu}(Y^{\rm op}(D))(Q, \hbar)  \mathsf{x}_{\vec \mu}\,. \nn \\
\label{eq:opengf}
\eea
In the equation above, $\cP$ denotes the set of partitions and $\chi_{\a}(\b)$ denotes the value of the irreducible character of the symmetric group $S_{|\a|}$ on the conjugacy class labelled by the partition $\b$. \\

\begin{figure}[t]
\begingroup%
  \makeatletter%
  \providecommand\color[2][]{%
    \errmessage{(Inkscape) Color is used for the text in Inkscape, but the package 'color.sty' is not loaded}%
    \renewcommand\color[2][]{}%
  }%
  \providecommand\transparent[1]{%
    \errmessage{(Inkscape) Transparency is used (non-zero) for the text in Inkscape, but the package 'transparent.sty' is not loaded}%
    \renewcommand\transparent[1]{}%
  }%
  \providecommand\rotatebox[2]{#2}%
  \newcommand*\fsize{\dimexpr\f@size pt\relax}%
  \newcommand*\lineheight[1]{\fontsize{\fsize}{#1\fsize}\selectfont}%
  \ifx\svgwidth\undefined%
    \setlength{\unitlength}{255.54383045bp}%
    \ifx\svgscale\undefined%
      \relax%
    \else%
      \setlength{\unitlength}{\unitlength * \real{\svgscale}}%
    \fi%
  \else%
    \setlength{\unitlength}{\svgwidth}%
  \fi%
  \global\let\svgwidth\undefined%
  \global\let\svgscale\undefined%
  \makeatother%
  \begin{picture}(1,0.53017675)%
    \lineheight{1}%
    \setlength\tabcolsep{0pt}%
    \put(0,0){\includegraphics[width=\unitlength,page=1]{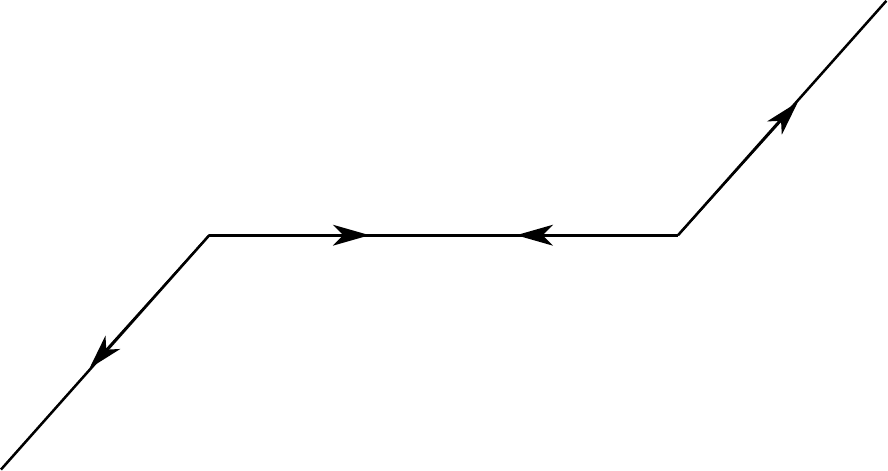}}%
    \put(0.27715295,0.45564018){\color[rgb]{0,0,0}\makebox(0,0)[lt]{\lineheight{1.25}\smash{\begin{tabular}[t]{l}$\mathsf{f}_1$\end{tabular}}}}%
    \put(0,0){\includegraphics[width=\unitlength,page=2]{toricgraph4.pdf}}%
    \put(0.91243334,0.36230617){\color[rgb]{0,0,0}\makebox(0,0)[lt]{\lineheight{1.25}\smash{\begin{tabular}[t]{l}$\mathsf{f}_2$\end{tabular}}}}%
    \put(0.66512096,0.06014346){\color[rgb]{0,0,0}\makebox(0,0)[lt]{\lineheight{1.25}\smash{\begin{tabular}[t]{l}$\mathsf{f}_3$\end{tabular}}}}%
    \put(0.09746216,0.24368211){\color[rgb]{0,0,0}\makebox(0,0)[lt]{\lineheight{1.25}\smash{\begin{tabular}[t]{l}$\mathsf{f}_4$\end{tabular}}}}%
  \end{picture}%
\endgroup%

\caption{The toric graph of the resolved conifold with four outer framed Lagrangians.}
\label{fig:conifold}
\end{figure}

The theory of the topological vertex consists of a glueing procedure to algorithmically compute $\cW_{\vec\nu}(Y^{\rm op}(D))$ from the representation-basis generating functions of toric Lagrangian triples $Y^{\rm op}(D)_i$, $i=1,2$ openly embedded into $Y(D)$. We content ourselves to state the consequences of the glueing algorithm in the two cases needed for for the study of Examples~I--III, referring the reader to  \cite[Sec.~6.1.2]{BBvG2} for a complete account. If $Y^{\rm op}(D)=\l(\mathrm{Tot}(\cO^{\oplus 2}_{\bbP^1}(-1)),L,\mathsf{f}\r)$ is the resolved conifold with $L$ the disconnected union of four outer branes in representations $\mu_1, \dots, \mu_4$ and framing shifts $\mathsf{f}=(f_1, \dots, f_4)$ as in \cref{fig:conifold}, we have
\bea
& &
\cW_{\mu_1,\mu_2,\mu_3,\mu_4}\big(\mathrm{Tot}\big(\cO^{\oplus 2}_{\bbP^1}(-1)\big),L, \mathsf{f}\big)(Q,\hbar) = \nn \\ &=& \sum_{\nu \in \cP} \cW_{\mu_1,\mu_2,\nu}\big(\bbC^3,(L_1, L_2, L_0),(f_1, f_2, f_0)\big)(\hbar) (-Q)^{|\nu|} \cW_{\nu^T,\mu_3,\mu_4}\big(\bbC^3,L,(-f_0, f_3,f_4)\big)(\hbar)\,, \nn \\
\label{eq:glueing}
\eea 
and if $Y^{\rm op}(D)=(\bbC^3,L,\mathsf{f})$ is the affine space with three outer branes in representations $\mu_1, \dots, \mu_3$ and framing shifts $\mathsf{f}=(f_1, \dots, f_3)$ as in \cref{fig:3vertex} (the framed 3-legged vertex), we have \cites{Aganagic:2003qj,moop}
\beq
\cW_{\mu_1,\mu_2,\mu_3}(\bbC^3,L,\mathsf{f})(\hbar) = q^{\kappa(\mu_1)/2} \prod_{i=1}^3 q^{f_i \kappa(\mu_i)/2}(-1)^{f_i|\mu_i|} \sum_{\delta \in \cP} s_{\frac{\mu_1^t}{\delta}}(q^{\rho+\mu_3})s_{\frac{\mu_2}{\delta}}(q^{\rho+\mu_3^t}) s_{\mu_3}(q^{\rho})\,.
\label{eq:vertex}
\eeq
where $s_\mu(q^{\rho+\a})$ denotes the principally-specialised $\a$-shifted Schur function in the representation of $\mathrm{GL}(\infty)$ labelled by $\mu$ (see \cite[Appendix~C]{BBvG2} for details),  $\kappa(\mu)$ is its second Casimir invariant of the partition $\mu$ normalised as $\kappa((1))=0$, and again $q=\re^{\ri \hbar}$.

\begin{figure}[t]
\begingroup%
  \makeatletter%
  \providecommand\color[2][]{%
    \errmessage{(Inkscape) Color is used for the text in Inkscape, but the package 'color.sty' is not loaded}%
    \renewcommand\color[2][]{}%
  }%
  \providecommand\transparent[1]{%
    \errmessage{(Inkscape) Transparency is used (non-zero) for the text in Inkscape, but the package 'transparent.sty' is not loaded}%
    \renewcommand\transparent[1]{}%
  }%
  \providecommand\rotatebox[2]{#2}%
  \newcommand*\fsize{\dimexpr\f@size pt\relax}%
  \newcommand*\lineheight[1]{\fontsize{\fsize}{#1\fsize}\selectfont}%
  \ifx\svgwidth\undefined%
    \setlength{\unitlength}{147.34940056bp}%
    \ifx\svgscale\undefined%
      \relax%
    \else%
      \setlength{\unitlength}{\unitlength * \real{\svgscale}}%
    \fi%
  \else%
    \setlength{\unitlength}{\svgwidth}%
  \fi%
  \global\let\svgwidth\undefined%
  \global\let\svgscale\undefined%
  \makeatother%
  \begin{picture}(1,1.01962887)%
    \lineheight{1}%
    \setlength\tabcolsep{0pt}%
    \put(0,0){\includegraphics[width=\unitlength,page=1]{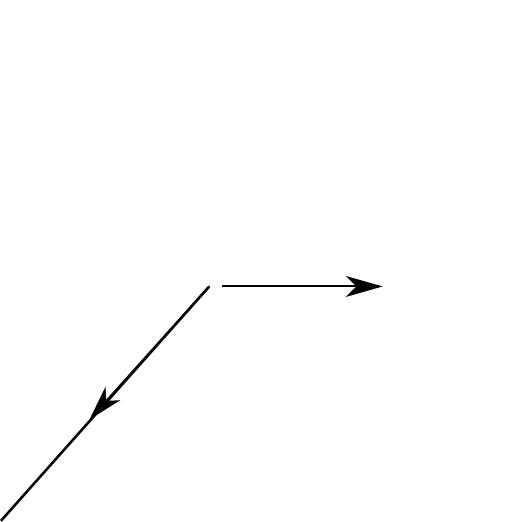}}%
    \put(0.14866089,0.37867717){\color[rgb]{0,0,0}\makebox(0,0)[lt]{\lineheight{1.25}\smash{\begin{tabular}[t]{l}$\mathsf{f}_3$\end{tabular}}}}%
    \put(0,0){\includegraphics[width=\unitlength,page=2]{3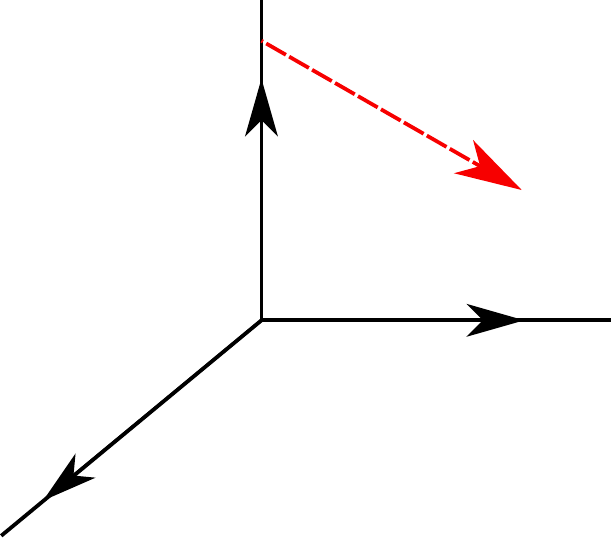}}%
    \put(0.47548031,0.89870801){\color[rgb]{0,0,0}\makebox(0,0)[lt]{\lineheight{1.25}\smash{\begin{tabular}[t]{l}$\mathsf{f}_1$\end{tabular}}}}%
    \put(0.84625179,0.3215797){\color[rgb]{0,0,0}\makebox(0,0)[lt]{\lineheight{1.25}\smash{\begin{tabular}[t]{l}$\mathsf{f}_2$\end{tabular}}}}%
    \put(0,0){\includegraphics[width=\unitlength,page=3]{3vertexfrac.pdf}}%
  \end{picture}%
\endgroup%

\caption{The toric graph of the framed 3-legged vertex.}
\label{fig:3vertex}
\end{figure}





\section{Example I}\label{sec:I}

Let $a,b$ be coprime positive integers. Then $\bbP(1,a,b)$ has toric divisors $D_{(-1,0)}$, $D_{(0,-1)}$ and $D_{(b,a)}$ with relations
\beq
D_{(-1,0)} \sim b D_{(b,a)}\,, \quad D_{(0,-1)} \sim a D_{(b,a)}\,,
\eeq
intersections
\beq
D_{(-1,0)} \cdot D_{(0,-1)}=1\,, \quad D_{(0,-1)} \cdot D_{(b,a)}=\frac{1}{b}\,, \quad D_{(-1,0)} \cdot D_{(b,a)}=\frac{1}{a}\,,
\eeq
and self-intersections
\beq
D^2_{(-1,0)}=\frac{b}{a}\,, \quad D^2_{(0,-1)}=\frac{a}{b}\,, \quad D^2_{(b,a)}=\frac{1}{ab}\,.
\eeq 
To obtain a log smooth nef log Calabi--Yau surface, we choose $D_1=D_{(b,a)}$ and $D_2$ a smooth element of the linear system of $D_{(-1,0)}+D_{(0,-1)}$ so that
\beq
D_1^2=\frac{1}{ab}\,, \quad D_2^2=\frac{(a+b)^2}{ab}\,.
\eeq
Writing $D=D_1+D_2$, the topological Euler characteristic of the complement of $D$ is 
\beq \chi(\bbP(1,a,b)\setminus D)=1\,.
\eeq

Denote by $H$ the effective generator of $\hhh_2(\PP(1,a,b),\ZZ)$. Notice that $H^2=ab$, $D_1\sim \frac{1}{ab}H$ and $D_2\sim \frac{a+b}{ab}H$. Consequently, for a curve class $d=dH$, 
\beq
d\cdot D_1=d, \quad d \cdot D_2 = d(a+b)\,.
\eeq
\subsection{Local GW invariants}

Let $T\simeq(\bbC^\star)^2\circlearrowright E_{\exI}$ be the fibre-wise action on $E_{\exI}$, and denote by $\lambda_i$, $i=1,2$ its equivariant parameters. The inverse of the Gram matrix of the $T$-equivariant Poincar\'e pairing in the basis $\{1,H, H^2\}$ reads
\beq 
\eta^{-1}_{\exI} = 
\left(
\begin{array}{ccc}
 0 & 0 & \frac{\lambda _1 \lambda _2}{a b} \\
 0 & \frac{\lambda _1 \lambda _2}{a b} & -\frac{a \lambda _1+b \lambda _1+\lambda _2}{a^2 b^2} \\
 \frac{\lambda _1 \lambda _2}{a b} & -\frac{a \lambda _1+b \lambda _1+\lambda _2}{a^2 b^2} & \frac{a+b}{a^3 b^3} \\
\end{array}
\right) \,.
\label{eq:etaI}
\eeq
From \cite[Thm~0.1]{MR1653024} and \cite[Thm~4.6]{MR2510741}, the $T$-equivariant $I$-function of $E_{\exI}$ is
\beq 
I^{\exI}(y,z) = z y^{H/z}\sum_{d \geq 0} y^d \frac{\prod_{m=0}^{d-1}(\lambda_1 - H/(ab)-m z)\prod_{m=0}^{(a+b)d-1}(\lambda_2 - (a+b)H/(ab)-m z)}{\prod_{m=1}^{d}(H/(ab)+m z)\prod_{m=1}^{ad}(H/b+m z)\prod_{m=1}^{bd}(H/a+m z)}\,.
\label{eq:IfunI}
\eeq 
We have
\beq 
I^{\exI}(y,z) = z+(\log y) H + \cO\l(\frac{1}{z}\r) \,,
\eeq 
and therefore, as alluded to in \cref{sec:localtech}, the mirror map is trivial,
\beq 
J_{\rm small}^{\exI}(t,z)=I^{\exI}(e^t,z)\,.
\eeq 
Since $l=2$ and thus $N_d^{\rm loc,\psi}(\exI)=N_d^{\rm loc}(\exI)$,  \eqref{eq:NlocpsiJ} gives
\beq 
N_d^{\rm loc}(\exI)= \frac{1}{a b} \sum_{i \in \{0,1,2\}} (\eta_{\exI})_{2,i} [H^i z^{-1}y^d] I^{\exI}(y,z)\,,
\label{eq:NdlocI0}
\eeq 
where, from \eqref{eq:IfunI}, we have  \beq 
[z^{-1} y^d] I^{\exI}(y,z) = \frac{(-1)^{d (a+b+1)} \Gamma ((a+b) d) (H-a b \lambda_1 ) (H (a+b)-a b \lambda_2 )}{a^2 b^2 d \Gamma (a d+1) \Gamma (b d+1)} \,,
\label{eq:IfunIzm1}
\eeq 
and we have used that $H^2=(ab) \mathrm{pt}$. Combining \eqref{eq:etaI}, \eqref{eq:NdlocI0} and \eqref{eq:IfunIzm1} finally yields
\beq
N_d^{\rm loc}(\exI)=\frac{(-1)^{d (a+b+1)}}{d^2 (a+b)} \binom{(a+b) d}{a d}\,.
\label{eq:NdlocI}
\eeq 
\subsection{Log GW invariants}

\begin{prop} \label{prop:P1ab}
Let $a,b$ be coprime positive integers. Then
\[
\mathsf{N}^{\rm log}_d(\mathbb{P}(1,a,b))(\hbar)=\qbinom{(a+b)d}{ad}_q.
\label{eq:NdlogI}
\]
\end{prop}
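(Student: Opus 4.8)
The plan is to compute the generating function $\mathsf{N}^{\rm log}_d(\bbP(1,a,b))$ directly, by means of the quantum scattering diagram of \cref{sec:scattdiags} and the broken-line formula \cite[Proposition 4.2]{BBvG2}; note that for $a=b=1$ the pair is $(\bbP^2,\,\mathrm{line}+\mathrm{conic})$, so this is the orbifold extension of a computation already contained in \cite{BBvG2}. The first task is to build the scattering diagram. Since $D_1=D_{(b,a)}$ is toric while $D_2$ is a smooth member of $|D_{(-1,0)}+D_{(0,-1)}|$, the pair $(\bbP(1,a,b),D)$ admits a toric model $(\wt Y,\wt D)\to(\ol Y,\ol D)$ realised by a single non-toric blow-up --- consistently with $\chi(\bbP(1,a,b)\setminus D)=1$ --- so the affine manifold $B$ carries asymptotic rays $\rho_1$ in the direction of $D_1$ together with the rays supporting $D_2$, plus exactly one focus-focus singularity, sitting on the ray along which the node $D_{(-1,0)}\cap D_{(0,-1)}$ has been smoothed. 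Perturbing this singularity off its ray produces a single wall $\mathfrak{d}$ carrying a quantum wall-crossing function of the shape $1+t\,z^{m_{\mathfrak{d}}}$ in the relevant quantum torus, with $t$ recording the class of the exceptional curve; since this is the only non-trivial wall, no further scattering occurs and the scattering diagram is completely explicit.

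The second task is to read $\mathsf{N}^{\rm log}_d(\bbP(1,a,b))$ off this diagram via \cite[Proposition 4.2]{BBvG2}: it is the coefficient of $z^{dH}$ in a product $\vartheta_{p_1}\star\vartheta_{p_2}$ of the theta functions attached to the asymptotic directions of $D_1$ and $D_2$, evaluated in suitable chambers, and this coefficient is a weighted count of quantum broken lines. Bending occurs only at $\mathfrak{d}$, whose wall-crossing function $1+t\,z^{m_{\mathfrak{d}}}$ is linear, so the broken lines contributing in degree $d$ are lattice paths with a fixed number of elementary steps, a prescribed number of which are bends; the maximal-tangency conditions $d\cdot D_1=d$ and $d\cdot D_2=(a+b)d$, together with the splitting of $D_2$ into the toric pieces $D_{(-1,0)}$ and $D_{(0,-1)}$ of contact orders $bd$ and $ad$, force the enumeration to range over the $\binom{(a+b)d}{ad}$ such paths. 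The point of working with the quantum scattering diagram rather than its classical limit is that each configuration acquires a monomial weight in $q^{\pm 1/2}$ coming from the $q$-commutation in the quantum torus, i.e.\ from the quantum wall-crossing automorphism; summing these weights is a $q$-enumeration of lattice paths in an $ad\times bd$ rectangle by area, which by a standard $q$-binomial identity equals the Gaussian binomial $\qbinom{(a+b)d}{ad}_q$, as claimed.

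The step I expect to be the main obstacle is the first one: pinning down the toric model and the precise combinatorics of the perturbed diagram in the orbifold regime. The fractional self-intersections $D_{(-1,0)}^2=b/a$, $D_{(0,-1)}^2=a/b$, $D_1^2=1/(ab)$ mean that the integral affine structure on $B$ near the rays through the two cyclic quotient points is the $\bbQ$-Cartier, root-stack-adapted one, so one must check that the perturbation of the focus-focus singularity, the balancing condition at the trivalent vertices, the bending rule for quantum broken lines, and the identification of wall monomials with curve classes are all applied in their orbifold-corrected forms, as in \cref{sec:scattdiags} and \cite[\S 4.2]{BBvG2}; this is where a sign or a lattice normalisation is most likely to slip. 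Once the diagram is fixed, the rest is bookkeeping of $q$-powers. As a consistency check, the specialisation $q\to 1$ gives $N^{\rm log}_{0,d}(\bbP(1,a,b))=\binom{(a+b)d}{ad}$, which combined with the already computed $N^{\rm loc}_d(\exI)=\tfrac{(-1)^{d(a+b+1)}}{d^2(a+b)}\binom{(a+b)d}{ad}$ of \eqref{eq:NdlocI} matches the numerical log-local relation of \cref{thm:log-local} and establishes it for Example~I.
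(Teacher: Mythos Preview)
Your overall strategy coincides with the paper's: build the quantum scattering diagram of the pair and read off $\mathsf{N}^{\rm log}_d$ as a structure constant of the theta-function product via \cite[Proposition~4.2]{BBvG2}. But two of the concrete steps in your plan are not right, and they are precisely the two you would need to carry out.

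\textbf{The toric model.} Your description places the focus--focus singularity ``on the ray along which the node $D_{(-1,0)}\cap D_{(0,-1)}$ has been smoothed''; this is not how toric models are produced in \cite{GPS,GHKlog}. What the paper does is a corner blow-up/down: add the ray $(-1,1)$ (a corner blow-up at $D_1\cap D_{(-1,0)}$, creating a new boundary component $D_3$), contract the proper transform of $D_{(-1,0)}$ (now a $(-1)$-curve), and apply $\bigl(\begin{smallmatrix}1&0\\1&1\end{smallmatrix}\bigr)\in\mathrm{SL}_2(\bbZ)$. The resulting toric surface is $\bbP(1,a+b,b)$ with rays $(-1,0),(0,-1),(b,a+b)$, and the single focus--focus singularity sits on the $\overline{D_3}$-ray $(-1,0)$ with wall function $1+tx^{-1}$. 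In particular your worry about ``orbifold-corrected'' affine structures is misplaced: after passing to the toric model one works on the ordinary integral fan of $\bbP(1,a+b,b)$, and nothing fractional enters the broken-line computation.

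\textbf{The combinatorics.} There is no enumeration of ``$\binom{(a+b)d}{ad}$ lattice paths'', nor a $q$-area sum over many broken lines. For a generic base point $p$ there is exactly \emph{one} admissible pair of broken lines: $\beta_1$ comes from the $D_1$-direction with monomial $x^{bd}y^{(a+b)d}$ and does not bend, while $\beta_2$ comes from the $D_2$-direction with $y^{-(a+b)d}$ and crosses the single wall once. The quantum wall-crossing automorphism sends $y^{-(a+b)d}$ to a sum over bends, and the term matching the curve-class constraint (degree $bd$ in $t$, equivalently exponent $-bd$ in $x$) carries the coefficient $\qbinom{(a+b)d}{bd}_q=\qbinom{(a+b)d}{ad}_q$. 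That single coefficient \emph{is} the invariant; the $q$-binomial appears directly from the wall-crossing rule, not from summing weights over a family of configurations.
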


In the $\hbar\to 0$ ($q\to 1$) limit, this recovers the log-local correspondence of \cref{thm:log-local} for $Y(D)=\exI$.

\begin{figure}[t]
\begin{tikzpicture}[smooth, scale=1.2]
\draw[<->] (-2.5,2) to (-1.5,2);
\draw[<->] (-2,1.5) to (-2,2.5);
\node at (-1.67,2.15) {$\scriptstyle{x}$};
\node at (-2.15,2.3) {$\scriptstyle{y}$};
\draw[step=1cm,gray,very thin] (-2.5,-2.5) grid (2.5,2.5);
\draw[thick] (2.5,0) to (-2.5,0);
\draw[thick] (0,-2.5) to (0,0);
\draw[thick] (0,0) to (1.25,2.5);
\node at (-0.4,-1.7) {$\overline{D_2}$};
\node at (0.9,1.2) {$\overline{D_1}$};
\node at (-1.5,0.3) {$\overline{D_3}$};
\node at (-2.3,0) {$\times$};
\node at (0.55,2.2) {$\sstyle{(b,a+b)}$};
\node at (1,2) {$\sstyle{\bullet}$};
\node at (-0.5,0.2) {$\sstyle{1+tx^{-1}}$};
\end{tikzpicture}
\caption{The toric model of $\exI$.}
\label{fig:P1ab}
\end{figure}

\begin{proof}
We follow the computational technique sketched out in Section \ref{sec:scattdiags}, and given in full detail in \cite[Section 4.2]{BBvG2}. We start by finding a toric model for $(\bbP(1,a,b),D_1+D_2)$.
We view $\bbP(1,a,b)$ as given by the fan generated by $(-1,0)$, $(0,-1)$ and $(b,a)$. We add a ray in the direction $(-1,1)$ which yields a new divisor $D_3$ and a birational map
\[
\varphi : \l(\widetilde{\bbP(1,a,b)},(D_1-D_3)+(D_2-D_3)+D_3\r) \to \l(\bbP(1,a,b),D_1+D_2\r).
\]
Then the proper transform of $D_{(-1,0)}$ is a $(-1)$-curve, which we contract:
\[
\pi : \l(\widetilde{\bbP(1,a,b)},(D_1-D_3)+(D_2-D_3)+D_3\r) \to \l(\overline{\bbP(1,a,b)},\overline{D_1}+\overline{D_2}+\overline{D_3}\r).
\]
The complement of the proper transform of $\overline{D}=\overline{D_1}+\overline{D_2}+\overline{D_3}$ now has Euler characteristic 0, hence is $(\bbC^*)^2$, therefore the variety is toric.
We apply the $\mathrm{SL}_2(\bbZ)$ transformation given by
$ \begin{pmatrix}
1 & 0 \\
1 & 1 \\
\end{pmatrix}$
to the fan and obtain the toric model of \cref{fig:P1ab}.
The toric surface $\overline{\bbP(1,a,b)}$ is the weighted projective space $\bbP(1,a+b,b)$. $\pi$ is determined by blowing up a smooth point on the divisor $\overline{D_3}$.
This yields a focus-focus singularity on the ray directed by $(-1,0)$ corresponding to $\overline{D_3}$.

Next we multiply the theta functions corresponding to $D_1$ and $D_2$ and extract the identity component. To do so, we consider all the ways of combining two broken lines, one coming from the direction $D_1$ with weight $d\cdot D_1$, one coming from the direction $D_2$ with weight $d\cdot D_2$, at a fixed point $p$, so that they are opposite at $p$. Then the desired log Gromov--Witten invariant is the sum of the product over the end-coefficients of all possible ways of combining two broken lines in that way.
There is only one configuration that solves this problem. This starts with a broken line $\beta_1$ coming from the $D_1$-direction carrying the monomial $(x^by^{a+b})^{d\cdot D_1}=x^{bd}y^{(a+b)d}$ and a broken line $\beta_2$ coming from the $D_2$-direction carrying the monomial $(y^{-1})^{d\cdot D_2}=y^{-(a+b)d}$. The focus-focus singularity produces a wall with wall-crossing function $1+tx^{-1}$, see \cite[Section 4.2]{BBvG2}. When $\beta_2$ crosses this wall, it picks up a contribution from the wall-crossing function, see \cite[Section 4.2]{BBvG2}. In order to be opposite to $\beta_1$ at $p$, $\beta_2$ picks up the wall-crossing contribution that has degree $-bd$ in $x$. In fact, the resulting product of broken lines needs to respect the intersection profile of $d$. Since the focus-focus singularity corresponds to $D_{(-1,0)}$
and $d\cdot D_{(-1,0)}=bd$, the product of the end-coefficients of $\beta_1$ and $\beta_2$ needs to have degree $bd$ in $t$, see \cite[Section 4.2]{BBvG2}. Either condition determines the wall-crossing as sketched in Figure \ref{fig:P1abscatt} and $\beta_2$ picks up the $q$-binomial coefficient $\qbinom{(a+b)d}{bd}_q=\qbinom{(a+b)d}{ad}_q$ which is $\mathsf{N}^{\rm log}_d(\mathbb{P}(1,a,b))(q)$ by \cite[Proposition 4.2]{BBvG2}. 
\end{proof}

\begin{figure}[t]
\begin{tikzpicture}[smooth, scale=1.2]
\draw[<->] (-2.5,2) to (-1.5,2);
\draw[<->] (-2,1.5) to (-2,2.5);
\node at (-1.67,2.15) {$\scriptstyle{x}$};
\node at (-2.15,2.3) {$\scriptstyle{y}$};
\draw[step=1cm,gray,very thin] (-2.5,-2.5) grid (3.5,2.5);
\draw[thick] (3.5,0) to (-2.5,0);
\draw[thick] (0,-2.5) to (0,0);
\draw[thick] (0,0) to (1.25,2.5);
\node at (-0.4,-1.7) {$\overline{D_2}$};
\node at (0.9,1.2) {$\overline{D_1}$};
\node at (-1.5,0.3) {$\overline{D_3}$};
\node at (-2.3,0) {$\times$};
\node at (0.55,2.2) {$\sstyle{(b,a+b)}$};
\node at (-0.5,0.2) {$\sstyle{1+tx^{-1}}$};
\draw[<->,thick] (1.5,-2.5) to (1.5,0) to (2.75,2.5);
\node at (2.1,-2.2) {$\sstyle{y^{-(a+b)d}}$};
\node at (3.3,2.2) {$\sstyle{x^{bd}y^{(a+b)d}}$};
\node at (3.2,0.4) {$\sstyle{\qbinom{(a+b)d}{bd}_qt^{bd}x^{-bd}y^{-(a+b)d}}$};
\node at (2,1) {$\bullet$};
\node at (2.2,1.1) {$\sstyle{p}$};
\end{tikzpicture}
\caption{$\scatt\exI$}
\label{fig:P1abscatt}
\end{figure}

\subsection{Open GW invariants}
By \cite[Construction~6.4]{BBvG2}, $(\exI)^{\rm op}$ is a toric Lagrangian triple $(X,L,\mathsf{f})$ depicted by the toric graph in \cref{fig:1vertex}. In particular, deleting $D_1=D_{(b,a)}$ in $\bbP(1,a,b)$ gives two dimensional affine space $\bbC^2$, and therefore $X=K_{\bbP(1,a,b)\setminus D_{(b,a)}} = K_{\bbC^2}\simeq \bbC^3$. The compactification to $K_{\bbP(1,a,b)}$ induces a framing on an outer Lagrangian $L$ on $\bbC^3$ shifted by $b/a$ with respect to the canonical framing, and the finite index morphism $\iota: \hhh_2^{\rm rel}(\bbC^3,L,\bbZ) \simeq \hhh_1(L, \bbZ) \to A_1(\bbP(1,a,b))$ is defined by $[S^1]\to [D_{(-1,0)}] = H/a$. In particular, the winding number $ad$ around $L$ corresponds to the curve class $dH$. 

The all-genus, 1-hole open GW generating function of $(\exI)^{\rm op}$ at winding $j=a d$ is, by \eqref{eq:openfr2} and \eqref{eq:vertex}:
\bea
\mathsf{O}_{j}((\exI)^{\rm op}) &=& \sum_{s=0}^{j-1}\frac{(-1)^{s}}{j} \cW_{(j-s,1^s),\emptyset,\emptyset}(\bbC^3,L,\mathsf{f}) \nn \\ &=& \sum_{s=0}^{j-1} 
\frac{(-1)^{s}}{j} q^{b/a\kappa((j-s,1^s)/2)}(-1)^{b/a j} s_{(j-s,1^s)}(q^\rho) \,,
\eea 
where $(j-s,1^s)$ denotes a partition represented by a hook Young diagram with $j$ boxes and $s+1$ rows. Using the hook formula for Schur functions \cite{MR325407},
\beq 
s_{(j-s,1^s)}(q^\rho) = \frac{q^{\frac{1}{2}\binom{j}{2}-\frac{j s}{2}}}{[j]_q [j-s]_q! [s]_q!}\,,
\eeq 
we get
\bea
\mathsf{O}_{j}((\exI)^{\rm op}) &=& 
  \frac{(-1)^{b/aj} q^{\l(\frac{b}{a}+\frac{1}{2}\r)\binom{j }{2}}}{j [j]_q} \sum_{s=0}^{d-1} \frac{(-1)^{s}  q^{-\l(\frac{b}{a}+\frac{1}{2}\r)j s}}{[j-s-1]_q![s]_q!}, \nn \\
 &=&  \frac{(-1)^{b/aj} q^{\l(\frac{b}{a}+\frac{1}{2}\r)\binom{j}{2}}}{j [j]_q!} \sum_{s=0}^{j-1}   \qbinom{j-1}{s}_q \big(-q^{bj/a}\big)^s   q^{-\frac{j s}{2}},  \nn \\
 &=&  \frac{(-1)^{b/aj} q^{\l(\frac{b}{a}+\frac{1}{2}\r)\binom{j}{2}}}{j [j]_q!} \prod_{k=1}^{j-1}   \big(1-q^{-k-bj/a}\big), \nn \\
  &=&  \frac{(-1)^{b/aj}}{j [j]_q!}  \frac{[(1+b/a)j-1]_q!}{[b/a j]_q!} = \frac{(-1)^{jb/a}}{j [(1+b/a)j]_q}  \qbinom{(1+b/a)j}{j}_q\,.
\eea 
In this case, the map $\iota: \hhh_1(L,\bbZ) \to {\rm A}_1(\bbP(1,a,b))$ of \eqref{eq:iota} is 
\beq
\iota : [S^1] \mapsto [D_{(-1,0)}]=\frac{H}{a},
\eeq 
therefore, setting $j=da$,
\beq
\mathsf{O}_{d a}((\exI)^{\rm op})=\frac{(-1)^{d b}}{d a [(a+b) d]_q} \qbinom{(a+b) d}{a d}_q\,,
\eeq 
which proves \cref{thm:logopen} for $\exI$.

\begin{figure}[t]
\begingroup%
  \makeatletter%
  \providecommand\color[2][]{%
    \errmessage{(Inkscape) Color is used for the text in Inkscape, but the package 'color.sty' is not loaded}%
    \renewcommand\color[2][]{}%
  }%
  \providecommand\transparent[1]{%
    \errmessage{(Inkscape) Transparency is used (non-zero) for the text in Inkscape, but the package 'transparent.sty' is not loaded}%
    \renewcommand\transparent[1]{}%
  }%
  \providecommand\rotatebox[2]{#2}%
  \newcommand*\fsize{\dimexpr\f@size pt\relax}%
  \newcommand*\lineheight[1]{\fontsize{\fsize}{#1\fsize}\selectfont}%
  \ifx\svgwidth\undefined%
    \setlength{\unitlength}{175.84818442bp}%
    \ifx\svgscale\undefined%
      \relax%
    \else%
      \setlength{\unitlength}{\unitlength * \real{\svgscale}}%
    \fi%
  \else%
    \setlength{\unitlength}{\svgwidth}%
  \fi%
  \global\let\svgwidth\undefined%
  \global\let\svgscale\undefined%
  \makeatother%
  \begin{picture}(1,0.87990461)%
    \lineheight{1}%
    \setlength\tabcolsep{0pt}%
    \put(0,0){\includegraphics[width=\unitlength,page=1]{vertexfrac.pdf}}%
    \put(0.54526264,0.65736578){\makebox(0,0)[lt]{\lineheight{1.25}\smash{\begin{tabular}[t]{l}$\mathsf{f}$\end{tabular}}}}%
  \end{picture}%
\endgroup%

\caption{The toric graph of $(\exI)^{\rm op}$, depicted for $a=3$, $b=2$.}
\label{fig:1vertex}
\end{figure}

\section{Example II}
\label{sec:II}

Define $\exII$ by considering the blow up
$\pi : 
\exII \longrightarrow \exI 
$
at a smooth point of the second divisor of $\bbP(1,a,b)$. Torically, we take the fan with rays $(-1,0)$, $(-1,-1)$ $(0,-1)$ and $(b,a)$ and choose $D_1=D_{(b,a)}$ as well as $D_2$ a smooth member of the linear system $D_{(-1,0)}+D_{(-1,-1)}+D_{(0,-1)}$. The relations are
\beq
D_{(-1,0)} + D_{(-1,-1)} \sim b D_1, \quad D_{(0,-1)} + D_{(-1,-1)} \sim a D_1,
\eeq
the intersections are
\beq
D_{(-1,0)} \cdot D_{(-1,-1)}=D_{(-1,-1)} \cdot D_{(0,-1)}=1, \quad D_{(0,-1)} \cdot D_{(b,a)}=\frac{1}{b}, \quad D_{(-1,0)} \cdot D_{(b,a)}=\frac{1}{a},
\eeq
and the self-intersections are
\beq
D^2_{(-1,0)}=\frac{b}{a}-1, \quad D^2_{(-1,-1)}=-1, \quad D^2_{(0,-1)}=\frac{a}{b}-1, \quad D^2_{(b,a)}=\frac{1}{ab}.
\eeq 
Writing $D=D_1+D_2$, the topological Euler characteristic of the complement of $D$ is $\chi(Y^{[2]}_{(a,b)}\setminus D) = 2$. Then, $\hhh_2(Y_{(a,b)},\bbZ)$ is generated by the proper transform $\pi^*H\sim ab D_1$ and by the class of $D_{(-1,-1)}$. We write an effective curve class as $d=d_0 f+d_1 E$ with
\beq
E \coloneqq D_{(-1,-1)}, \quad f \coloneqq \pi^*H-D_{(-1,-1)} \,,
\label{eq:fE}
\eeq 
so that
\beq 
d\cdot D_1 = d_0, \quad d \cdot D_2 = (a+b-1)d_0+d_1, \quad d\cdot \pi^*D_{(-1,0)} = bd_0 \quad d\cdot D_{(-1,-1)}=d_0-d_1\,.
\eeq




\subsection{Local GW invariants}

In this case, the (inverse) Gram matrix of the $T$-equivariant Poincar\'e pairing in the basis $\{\varphi_0, \varphi_1, \varphi_2, \varphi_3\}=\{1, E, f, \mathrm{pt}\}$ reads
\beq 
\eta^{-1}_{{\exII}} =
\left(
\begin{array}{cccc}
 0 & 0 & 0 & \lambda _1 \lambda _2 \\
 0 & -\frac{\lambda _1 \lambda _2 (a b-1)}{a b} & \frac{\lambda _1 \lambda _2}{a b} & -\frac{-a (b-1) \lambda _1+b \lambda
   _1+\lambda _2}{a b} \\
 0 & \frac{\lambda _1 \lambda _2}{a b} & \frac{\lambda _1 \lambda _2}{a b} & -\frac{a \lambda _1+b \lambda _1+\lambda _2}{a b} \\
 \lambda _1 \lambda _2 & -\frac{-a (b-1) \lambda _1+b \lambda _1+\lambda _2}{a b} & -\frac{a \lambda _1+b \lambda _1+\lambda
   _2}{a b} & \frac{1}{a}+\frac{1}{b} \\
\end{array}
\right)\,,
\label{eq:etaII}
\eeq 
and the $T$-equivariant $I$-function is
\bea
I^{\exII}(y,z) &=& 
  \sum_{d_0, d_1\geq 0} \Bigg[ \frac{z y_0^{p_0/z+p_1/z}y_0^{d_0}y_1^{d_1}\prod_{m=0}^{d_0-1}(\lambda_1-p_0-m z) }{\prod_{m=1}^{d_0}(p_0+m z)\prod_{m=1}^{(b-1)d_0+d_1}((b-1)p_0+p_1+m z)}
\nn \\ & & \frac{ \prod_{m=0}^{(a+b-1)d_0+d_1-1}(\lambda_2+(1-b-a) p_0-p_1-m z)}{\prod_{m=1}^{d_0-d_1}(p_0-p_1+m z)\prod_{m=1}^{(a-1)d_0+d_1}((a-1)p_0+p_1+m z)}\Bigg]\,,
\label{eq:IfunII}
\eea  
where $p_0 \coloneqq H/(ab)$, $p_1 \coloneqq H/(ab)-E$. At $\cO(1)$ around $z=\infty$ we have
\beq 
I^{\exII}(y,z) = z+(\log y_0) p_0+ (\log y_1) p_1 + \cO\l(\frac{1}{z}\r) \,,
\eeq 
so the mirror map is once again trivial:
\beq 
J_{\rm small}^{\exII}(t,z)=I^{\exII}(e^t,z)\,.
\eeq 
From \eqref{eq:NlocpsiJ} we obtain
\beq 
N_d^{\rm loc}(\exII)= \sum_{i \in \{0,1,2,3\}} \big(\eta_{\exII}\big)_{3,i} [\varphi_i ~z^{-1}y^d] I_{\exII}(y,z)\,,
\label{eq:NdlocII}
\eeq 
where, from \eqref{eq:IfunII}, we have  
\beq 
[z^{-1}y^d] I^{{\exII}}(y,z) = \frac{\Gamma \left(d_0\right) \left(p_0-\lambda _1\right) (-1)^{d_0 (a+b)+d_1} \Gamma \left(a d_0+b d_0-d_0+d_1\right) \left(p_0
   (a+b-1)-\lambda _2+p_1\right)}{\Gamma \left(d_0+1\right) \Gamma \left(d_0-d_1+1\right) \Gamma \left(a d_0-d_0+d_1+1\right)
   \Gamma \left(b d_0-d_0+d_1+1\right)}\,.
\label{eq:IfunIIzm1}
\eeq 
Piecing \eqref{eq:etaII}, \eqref{eq:NdlocII} and \eqref{eq:IfunIIzm1} together finally leads to
\beq
N_d^{\rm loc}\l(\exII\r)=\frac{(-1)^{d_0 (a+b)+d_1} \left((a+b-1) d_0+d_1-1\right)!}{d_0 \left(d_0-d_1\right)!
   \left((a-1) d_0+d_1\right)! \left((b-1) d_0+d_1\right)!}\,.
   \label{eq:NdlocIIfin}
\eeq 

\subsection{Log GW invariants}

\begin{prop}
Let $a,b$ be coprime positive integers. Then
\beq
\mathsf{N}^{\rm log}_d\left(\exII\right)(q) = \qbinom{ad_0}{d_0-d_1}_q \qbinom{(a+b-1)d_0+d_1}{ad_0}_q.
\label{eq:NdlogII}
\eeq
\end{prop}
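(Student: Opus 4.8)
The proof will follow the same template as \cref{prop:P1ab}. We construct a toric model of $(\exII,D_1+D_2)$, read off its quantum scattering diagram, and extract $\mathsf{N}^{\rm log}_d(\exII)(q)$ as the identity-component structure constant of the product of the theta functions attached to $D_1$ and $D_2$, via \cite[Section~4.2 and Proposition~4.2]{BBvG2}. The only structural novelty compared with \cref{prop:P1ab} is that, since $\chi(\exII\setminus D)=2$, the toric model now carries \emph{two} focus-focus singularities rather than one; correspondingly we expect two wall-crossings to feed the broken-line count, and the product of two $q$-binomials on the right-hand side of \eqref{eq:NdlogII} to arise as the two resulting bending coefficients.

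\textbf{Toric model and broken-line count.} Because $\exII$ is the blow-up of $\exI$ at a smooth point of its boundary component $D_2$, I would obtain a toric model of $\exII$ from the one of $\exI$ in \cref{fig:P1ab} by adjoining one further interior blow-up corresponding to that point — in fan terms, starting from the rays $(-1,0),(-1,-1),(0,-1),(b,a)$ of $\exII$, introducing the auxiliary ray $(-1,1)$ (producing $D_3$, as in \cref{prop:P1ab}), contracting the resulting $(-1)$-curves, and applying the same $\mathrm{SL}_2(\bbZ)$-transformation as in \cref{prop:P1ab}. By \cite{AW} the resulting toric model has the same log Gromov--Witten theory as $(\exII,D)$. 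Its scattering diagram has two perturbed walls, of multiplicities $d\cdot D_{(-1,0)}=(b-1)d_0+d_1$ and $d\cdot E=d_0-d_1$ respectively; these two walls meet and undergo a single ``simple'' scattering, whose secondary wall should not contribute to the identity component. Multiplying the theta functions for $D_1$ and $D_2$, one combines a broken line $\beta_1$ from the $D_1$-direction of weight $d\cdot D_1=d_0$ with a broken line $\beta_2$ from the $D_2$-direction of weight $d\cdot D_2=(a+b-1)d_0+d_1$, summing the products of their end-coefficients over all ways of making them opposite at a fixed generic point. Imposing the intersection profile of $d$ — total degree $(b-1)d_0+d_1$ in the first wall parameter and $d_0-d_1$ in the second — I expect, exactly as in \cref{fig:P1abscatt}, a unique admissible configuration in which $\beta_2$ bends successively at the two walls. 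At its first crossing $\beta_2$ picks up the coefficient $\qbinom{(a+b-1)d_0+d_1}{(b-1)d_0+d_1}_q=\qbinom{(a+b-1)d_0+d_1}{ad_0}_q$ (with weight $d\cdot D_2$ before the crossing, and $(b-1)d_0+d_1$ transferred to the wall), leaving residual weight $ad_0$; at the second crossing it picks up $\qbinom{ad_0}{d_0-d_1}_q$. The product of the end-coefficients of $\beta_1$ and $\beta_2$ is then $\qbinom{ad_0}{d_0-d_1}_q\qbinom{(a+b-1)d_0+d_1}{ad_0}_q$, which by \cite[Proposition~4.2]{BBvG2} equals $\mathsf{N}^{\rm log}_d(\exII)(q)$.

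\textbf{Main obstacle, and consistency checks.} The crux, absent in \cref{prop:P1ab}, is the bookkeeping with two walls: one must pin down the precise positions and wall-crossing functions of the two perturbed walls and of the secondary scattering wall, rule out any configuration using the secondary wall or bending $\beta_1$, and verify that the admissible configuration is unique, so that the count genuinely factors as the stated product; a careful figure analogous to \cref{fig:P1abscatt} will be needed, and this is where most of the work lies. Two sanity checks support the answer. First, putting $d_1=d_0$ (so that $d\cdot E=0$ and the extra exceptional curve is invisible) collapses the first $q$-binomial to $1$ and recovers $\mathsf{N}^{\rm log}_{d_0}(\exI)(q)=\qbinom{(a+b)d_0}{ad_0}_q$ of \cref{prop:P1ab}. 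Second, in the $q\to1$ limit the right-hand side of \eqref{eq:NdlogII} becomes $\binom{ad_0}{d_0-d_1}\binom{(a+b-1)d_0+d_1}{ad_0}$; multiplying by $\prod_{j=1}^{2}\frac{(-1)^{d\cdot D_j-1}}{d\cdot D_j}$ with $d\cdot D_1=d_0$ and $d\cdot D_2=(a+b-1)d_0+d_1$ reproduces exactly $N^{\rm loc}_d(\exII)$ of \eqref{eq:NdlocIIfin}, in line with \cref{thm:log-local}.
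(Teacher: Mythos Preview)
Your overall strategy is exactly the paper's: pass to the toric model of $\exI$ in \cref{fig:P1ab}, add one extra focus-focus singularity for the interior blow-up on $D_2$, and read off the product of theta functions via \cite[Proposition~4.2]{BBvG2}. The consistency checks you give are correct.

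However, the broken-line bookkeeping you outline does not match the actual picture, and as written is internally inconsistent. In the paper's toric model the second focus-focus singularity lies on the ray of $\overline{D_2}$, i.e.\ in direction $(0,-1)$, with wall function $1+t_1 y^{-1}$; the first wall $1+t_0 x^{-1}$ is unchanged from \cref{fig:P1abscatt}. The wall parameters therefore track $d\cdot\pi^*D_{(-1,0)}=bd_0$ and $d\cdot E=d_0-d_1$, not $(b-1)d_0+d_1$ and $d_0-d_1$. More importantly, for the natural choice of $p$ it is \emph{not} the case that only $\beta_2$ bends: each broken line bends once. The line $\beta_1$ (initial monomial $x^{bd_0}y^{(a+b)d_0}$) crosses the vertical wall $1+t_1 y^{-1}$ and picks up $\qbinom{bd_0}{d_0-d_1}_q$, while $\beta_2$ (initial monomial $y^{-(a+b-1)d_0-d_1}$) crosses the horizontal wall $1+t_0 x^{-1}$ and picks up $\qbinom{(a+b-1)d_0+d_1}{bd_0}_q$. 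The product
\[
\qbinom{bd_0}{d_0-d_1}_q\qbinom{(a+b-1)d_0+d_1}{bd_0}_q=\qbinom{ad_0}{d_0-d_1}_q\qbinom{(a+b-1)d_0+d_1}{ad_0}_q
\]
then gives \eqref{eq:NdlogII}. Your ``residual weight $ad_0$'' heuristic for a second crossing of $\beta_2$ is not how the wall-crossing exponent is determined (it is the pairing of the current monomial with the wall normal, not a subtraction of degrees), and with the walls as above $\beta_2$ cannot pick up $\qbinom{ad_0}{d_0-d_1}_q$ at a second non-diagonal wall. So the approach is right, but the execution needs to be redone along the lines just described.
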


 In conjunction with \eqref{eq:NdlocIIfin}, taking the $q \to 1$ limit recovers the numerical log-local correspondence of  \cref{thm:log-local}.

\begin{proof}
Given that $Y_{(a,b)}$ differs from $\bbP(1,a,b)$ by blowing up a smooth point of $D_2$, the toric model of $Y^{[2]}_{(a,b)}$ differs from that of $\bbP(1,a,b)$ by additionally blowing up a smooth point of $\overline{D_2}$. At the level of scattering diagrams, we start with the one for $\exI$ as in Figure \ref{fig:P1abscatt} and add a focus-focus singularity in the direction $(0,-1)$ (corresponding to $\overline{D_2}$). This creates simple scattering (see \cite[Section 4.2]{BBvG2}) as described in Figure \ref{fig:Yabscatt}. The broken line calculation proceeds as for $\exI$, with now both broken lines crossing a wall. Which walls which broken lines cross depends on the location of $p$. However, the end-result of multiplying the theta functions is independent from the location of $p$, see \cite{GHS16}. Figure \ref{fig:Yabscatt} shows where the broken lines bend. As for $\exI$, the bending is determined by the intersection profile of $d$ with $D_1$, $D_2$ and the two exceptional divisors corresponding to the focus-focus singularities.

\begin{figure}[h]
\begin{tikzpicture}[smooth, scale=1.2]
\draw[<->] (-2.5,2) to (-1.5,2);
\draw[<->] (-2,1.5) to (-2,2.5);
\node at (-1.67,2.15) {$\scriptstyle{x}$};
\node at (-2.15,2.3) {$\scriptstyle{y}$};
\draw[step=1cm,gray,very thin] (-2.5,-2.5) grid (3.75,3.5);
\draw[thick] (3.75,0) to (-2.5,0);
\draw[thick] (-1,-2.5) to (-1,0);
\draw[thick] (-1,0) to (0.25,2.5);
\draw[thick] (3,-2.5) to (3,3.5);
\node at (-1.4,-1.7) {$\overline{D_2}$};
\node at (-0.1,1.2) {$\overline{D_1}$};
\node at (-2.3,0) {$\times$};
\node at (3,-2.3) {$\times$};
\node at (-0.45,2.2) {$\sstyle{(b,a+b)}$};
\node at (-1.5,0.2) {$\sstyle{1+t_0x^{-1}}$};
\node[rotate=90] at (2.8,-1.5) {$\sstyle{1+t_1y^{-1}}$};
\node[rotate=45] at (3.7,0.9) {$\sstyle{1+t_0t_1x^{-1}y^{-1}}$};
\draw[thick] (3,0) to (4.5,1.5);
\draw[<->,thick] (0,-2.5) to (0,0) to (3,2) to (3.75,3.5);
\node at (1,-1.6) {$\sstyle{y^{-(a+b-1)d_0-d_1}}$};
\node at (4.15,2.7) {$\sstyle{x^{bd_0}y^{(a+b)d_0}}$};
\node at (0.7,0.2) {$\circleed{2}$};
\node at (2.8,2.2) {$\circleed{1}$}; 
\node at (1.5,1) {$\bullet$};
\node at (1.6,0.9) {$\sstyle{p}$};
\end{tikzpicture}
\caption{$\scatt \exII$}
\label{fig:Yabscatt}
\end{figure}

At $\circleed{1}$, the broken line coming from the $D_1$-direction picks up
\[
\qbinom{bd_0}{d_0-d_1}_qt_1^{d_0-d_1}x^{bd_0}y^{(a+b-1)d_0+d_1}.
\]
At $\circleed{2}$, the broken line coming from the $D_2$-direction picks up 
\[
\qbinom{(a+b-1)d_0+d_1}{bd_0}_qt_0^{bd_0}x^{-bd_0}y^{-(a+b-1)d_0-d_1}.
\]
We conclude by \cite[Proposition 4.2]{BBvG2}, noting that
\[
\qbinom{bd_0}{d_0-d_1}_q \qbinom{(a+b-1)d_0+d_1}{bd_0}_q = \qbinom{ad_0}{d_0-d_1}_q \qbinom{(a+b-1)d_0+d_1}{ad_0}_q.
\]
\end{proof}

\subsection{Open GW invariants}

To construct $(\exII)^{\rm op}$, as for $\exI$, we take the canonical bundle on the complement of $D_1=D_{(b,a)}$. From the toric description of $\exII$ we have that $Y^{[2]}_{(b,a)} \setminus D_{(b,a)}$ is described by a fan with 1-dimensional cones $D_{(-1,0)}$, $D_{(0,-1)}$ and $D_{(-1,-1)}$, hence $Y^{[2]}_{(b,a)} \setminus D_{(b,a)} \simeq \cO_{\bbP^1}(-1)$, and $K_{Y^{[2]}_{(b,a)} \setminus D_{(b,a)}} \simeq \cO_{\bbP^1}(-1)\oplus \cO_{\bbP^1}(-1)$. By \cite[Construction~6.4]{BBvG2}, the compactification to $K_{Y^{[2]}_{(a,b)}}$ induces a framing of $b/a-1$ on the outer Lagrangian corresponding to the deletion of $D_{(b,a)}$; see \cref{fig:conifold2}.

\begin{figure}[t]
\begingroup%
  \makeatletter%
  \providecommand\color[2][]{%
    \errmessage{(Inkscape) Color is used for the text in Inkscape, but the package 'color.sty' is not loaded}%
    \renewcommand\color[2][]{}%
  }%
  \providecommand\transparent[1]{%
    \errmessage{(Inkscape) Transparency is used (non-zero) for the text in Inkscape, but the package 'transparent.sty' is not loaded}%
    \renewcommand\transparent[1]{}%
  }%
  \providecommand\rotatebox[2]{#2}%
  \newcommand*\fsize{\dimexpr\f@size pt\relax}%
  \newcommand*\lineheight[1]{\fontsize{\fsize}{#1\fsize}\selectfont}%
  \ifx\svgwidth\undefined%
    \setlength{\unitlength}{278.98099884bp}%
    \ifx\svgscale\undefined%
      \relax%
    \else%
      \setlength{\unitlength}{\unitlength * \real{\svgscale}}%
    \fi%
  \else%
    \setlength{\unitlength}{\svgwidth}%
  \fi%
  \global\let\svgwidth\undefined%
  \global\let\svgscale\undefined%
  \makeatother%
  \begin{picture}(1,0.49957593)%
    \lineheight{1}%
    \setlength\tabcolsep{0pt}%
    \put(0,0){\includegraphics[width=\unitlength,page=1]{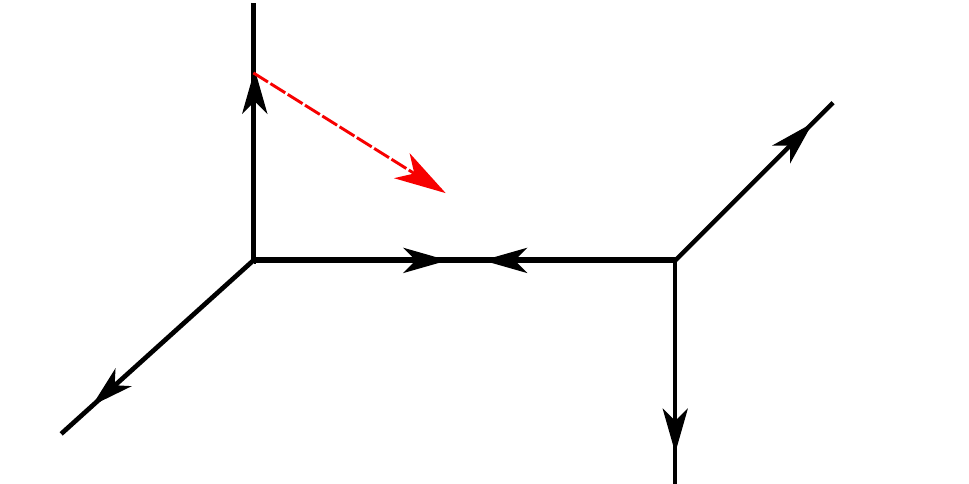}}%
    \put(0.30395601,0.40479229){\makebox(0,0)[lt]{\lineheight{1.25}\smash{\begin{tabular}[t]{l}$\mathsf{f}$\end{tabular}}}}%
  \end{picture}%
\endgroup%

\caption{The toric graph of $(\exII)^{\rm op}$, depicted for $a=3$, $b=5$.}
\label{fig:conifold2}
\end{figure}

From \eqref{eq:glueing} and \eqref{eq:vertex}, we have
\bea
\cW_\a((\exII)^{\rm op})(Q,\hbar) &=& q^{\l(\frac{b}{a}-1\r)\kappa(\a)/2}(-1)^{\l(\frac{b}{a}-1\r)|\a|}\sum_{\nu\in \cP}  s_{\nu^t}(q^{\rho+\a}) s_\a(q^\rho) s_\nu(-Q q^\rho) \nn \\ &=&
q^{\l(\frac{b}{a}-1\r)\kappa(\a)/2}(-1)^{\l(\frac{b}{a}-1\r)|\a|} s_\a(q^\rho)
\prod_{i,j\geq 1} \big(1-Q q^{-i-j+1+\a_i}\big)\,,
\eea
so that
\bea
\frac{\cW_{(j-s-1,1^s)}((\exII)^{\rm op})(Q,\hbar)}{\cW_{\emptyset}((\exII)^{\rm op})(Q,\hbar)}
&=&  \frac{(-1)^{(b/a-1) j}q^{\l(\frac{b}{a}-\frac{1}{2}\r)\l(\binom{j}{2} - j s\r)} \prod_{k=0}^{j-1} (1-q^{k} Q q^{-s})}{[j]_q [j-s-1]_q! [s]_q!}.
\eea
Using the Cauchy binomial theorem we get
\bea
& & 
\cO_j((\exII)^{\rm op})(Q,\hbar)  =  \sum_{s=0}^{j-1} \frac{(-1)^s}{j}
\frac{\cW_{(j-s-1,1^s)}((\exII)^{\rm op})(Q,\hbar)}{\cW_{\emptyset}((\exII)^{\rm op})(Q,\hbar)}
\nn \\
&=& \frac{(-1)^{(b/a-1) j}q^{\l(\frac{b}{a}-\frac{1}{2}\r)\binom{j}{2}}}{j [j]_q!} \sum_{l=0}^\infty q^{\frac{l(j+1)}{2}} \qbinom{j}{l}_q (-Q)^l q^{-l} \sum_{s=0}^{j-1} \qbinom{j-1}{s}_q (-q^{-(b/a-1)j-l})^{s}  q^{-\frac{1}{2}j s}  , \nn \\
&=& \frac{(-1)^{(b/a-1) j}q^{\l(\frac{b}{a}-\frac{1}{2}\r)\binom{j}{2}}}{j [j]_q!} \sum_{l=0}^\infty  q^{\frac{l(j-1)}{2}} \qbinom{j}{l}_q (-Q)^l \prod_{k=1}^{j-1} (1-q^{-(b/a-1)j-l-k}),
\eea 
so the $\cO(Q^l)$ coefficient reads
\beq
\mathsf{O}_{l;j}((\exII)^{\rm op})(\hbar)  = \frac{(-1)^{(b/a+1) j+l}}{j [b/aj+l]_q} \qbinom{b/a j+l}{j}_q \qbinom{j}{l}_q.
\eeq
The morphism  $\iota: \hhh_2(\bbP^1,\bbZ) \oplus \hhh_1(S^1,\bbZ) \to {\rm A}_1\l(\exII\r)$ in \eqref{eq:iota} reads
\bea
\iota : [S^1] & \mapsto & [D_{(-1,0)}] = \frac{H}{a} - E, \nn \\
\iota : [\bbP^1] & \mapsto & [D_{(-1,-1)}] = E \,,
\eea 
and, accordingly, the change-of-variables relating the curve degrees $(d_0, d_1)$ in $\hhh_2(Y_{(a,b)}, \bbZ)$ and the relative homology variables $(l; j)$ in $\hhh_2^{\rm rel}((Y_{(a,b)})^{\rm op},L,\bbZ)$ is 
\beq
\begin{array}{c}
 j\to d_0 a\,, \\
 l\to (a-1) d_0+d_1\,, \\
\end{array}
\eeq
verifying \cref{thm:logopen} for $\exII$.

\section{Example III}
\label{sec:III}

Start with the toric pair $\bbP(1,a,b)$ and blow up a smooth point of the toric divisor $D_{(-1,0)}\sim H/a$, see Figure \ref{fan:P1ab}. The result is a toric surface $Y^{[3]}_{(a,b)}$ with non-toric boundary $D=D_1+D_2+D_3$ determined as follows.
Torically, the fan of $Y^{[3]}_{(a,b)}$ is given by the rays directed by $(b,a)$, $(-1,0)$, $(-1,-1)$ and $(0,-1)$. The boundary $D$ consists of two toric divisors, $D_1$ corresponding to the ray $(b,a)$ and $D_2$ corresponding to the ray $(-1,0)$, as well as a non-toric divisor $D_3$ which is a smooth member of the linear system determined by the sum of the rays $(-1,-1)$ and $(0,-1)$. The topological Euler characteristic of the complement of $D$ is
$\chi(Y^{[3]}_{(a,b)}\setminus D)=1$. As before, we write an effective curve class as $d=d_0 f + d_1 E$, where $f,E \in \hhh_2(Y^{[3]}_{(a,b)},\bbZ)$ are defined in \eqref{eq:fE} and
\beq
d\cdot D_1 = d_0, \quad d\cdot D_2 = (b-1) d_0 + d_1, \quad d\cdot D_3 = a d_0, \quad d \cdot E = d_0-d_1.
\eeq

\subsection{Local GW invariants}

In this case, the (inverse) Gram matrix of the $T$-equivariant Poincar\'e pairing in the basis $\{\varphi_0, \varphi_1, \varphi_2, \varphi_3\}=\{1, E, f, \mathrm{pt}\}$ reads
\beq 
\eta^{-1}_{{\exIII}} =
\left(
\begin{array}{cccc}
 0 & 0 & 0 & \lambda _1 \lambda _2 \lambda _3 \\
 0 & -\frac{\lambda _1 \lambda _2 \lambda _3 (a b-1)}{a b} & \frac{\lambda _1 \lambda _2 \lambda _3}{a b} & \frac{b \lambda _1 \left(a \lambda _3-\lambda _2\right)-\lambda _3 \left(a \lambda _1+\lambda _2\right)}{a b} \\
 0 & \frac{\lambda _1 \lambda _2 \lambda _3}{a b} & \frac{\lambda _1 \lambda _2 \lambda _3}{a b} & -\frac{\lambda _3 \left(a \lambda _1+\lambda _2\right)+b \lambda _1 \lambda _2}{a b} \\
 \lambda _1 \lambda _2 \lambda _3 & \frac{b \lambda _1 \left(a \lambda _3-\lambda _2\right)-\lambda _3 \left(a \lambda _1+\lambda _2\right)}{a b} & -\frac{\lambda _3 \left(a \lambda _1+\lambda _2\right)+b \lambda _1 \lambda _2}{a b} & \frac{\lambda
   _2}{a}+\frac{\lambda _3}{b}+\lambda _1 \\
\end{array}
\right),
\eeq 
and the $T$-equivariant $I$-function is
\bea
I^{\exIII}(y,z) &=&
 \sum_{d_0, d_1\geq 0} \Bigg[ \frac{\prod_{m=0}^{d_0-1}(\lambda_1-p_0-m z) \prod_{m=0}^{(a-1)d_0+d_1-1}(\lambda_2+(1-a) p_0-p_1-m z)}{\prod_{m=1}^{d_0}(p_0+m z)\prod_{m=1}^{(a-1)d_0+d_1}((a-1)p_0+p_1+m z)}
\nn \\ & & \frac{z y_0^{p_0/z+p_1/z}y_0^{d_0} y_1^{d_1}\prod_{m=0}^{b d_0-1}(\lambda_3-b p_0-m z)}{\prod_{m=1}^{d_0-d_1}(p_0-p_1+m z)\prod_{m=1}^{(b-1)d_0+d_1}((b-1)p_0+p_1+m z)}\Bigg]\,,
\eea  
where $p_0 \coloneqq H/(ab)$, $p_1 \coloneqq H/(ab)-E$. It is immediate to check that there are no non-trivial contributions to the mirror map,
\beq
[z^0] I^{\exIII}(y,z) = (\log y_0) p_0+ (\log y_1) p_1 \,,
\eeq 
and so we obtain
\beq 
J_{\rm small}^{\exIII}(t,z)=I^{\exIII}(\re^t,z)\,.
\eeq 
For the $\cO(1/z)$ term in the expansion of the $J$-functions we have quantum corrections only when $d_0=0$ or $d_1= (1-a) d_0$:
\bea 
[z^{-1}] I^{\exIII}(y,z) &=&
\frac{ \left((1-a b) \log ^2y_1+\log ^2y_0+2 \log y_1 \log
   y_0\right)}{2 a b}\mathrm{pt} \nn \\
  &+& \sum_{d>0} \frac{(-1)^{(b+1) d} \left(p_0-\lambda _1\right) (b d-1)! \left(b p_0-\lambda _3\right) y_1^{d-a d} y_0^d }{d z (a d)! 
   ((b -a) d)!} \nn \\
   &+& \sum_{d>0} \frac{\left(p_1-p_0\right)  \left((a-1) p_0-\lambda _2+p_1\right)y_1^d}{d^2}\,.
   \label{eq:Ifunzm1}
\eea
Acting on \eqref{eq:Ifunzm1} with $ \theta_0 ((b-1) \theta_0+\theta_1)$, where $\theta_i \coloneqq y_i \partial_{y_i}$, annihilates the quantum corrections in the last two rows. This entails that the small quantum cohomology product of $p_0$ and $(b-1) p_0+p_1$ is equal to their classical cup product:
\beq 
p_0 \star_y ((b-1) p_0+p_1) =  p_0 \cup ((b-1) p_0+p_1) = \frac{H^2}{a^2 b}=\frac{\mathrm{pt}}{a}\,.
\eeq 
At the next order in $\cO(1/z)$, we have
\bea
& & [z^{-2}] I^{\exIII}(y,z) =
\nn \\ & &\sum_{(d_0,d_1)\neq (0,0)}\frac{y_0^{d_0} y_1^{d_1} (-1)^{d_0 (a+b)+d_1} \left(b d_0-1\right)! \left(p_0-\lambda _1\right)  \left((a-1)
   p_0-\lambda _2+p_1\right) \left(b p_0-\lambda _3\right)}{d_0 \left((a-1) d_0+d_1\right)  \left(d_0-d_1\right)! \left((b-1) d_0+d_1\right)!}\,,
\eea
so that
\bea
N^{\rm loc, \psi}_{d}\l(\exIII\r) &=& \eta_{{\exIII}}\l({\rm pt}, [z^{-2}y_0^{d_0} y_1^{d_1}] I^{\exIII}\r) \nn \\ &=& \frac{(-1)^{d_0 (a+b)+d_1+1}  \left(a d_0-1\right)!}{d_0 \left((b-1) d_0+d_1\right) \left(d_0-d_1\right)! \left((a-1)
   d_0+d_1\right)!}\,,
\eea 
and thus
\bea
N^{\rm loc}_{d}\l(\exIII\r) &=& \eta_{{\exIII}}\l({\rm pt}, [z^{-2} y_0^{d_0} y_1^{d_1}] b \theta_0 ((a-1) \theta_0+\theta_1) I^{\exIII}\big)\r) \nn \\ &=& a d_0((b-1)d_0+d_1) N^{\rm loc, \psi}_{d}\l(\exIII\r)\,.
\label{eq:locIII}
\eea 

\subsection{Log GW invariants}

\begin{figure}[t]
\begin{tikzpicture}[smooth, scale=1.2]
\draw[step=1cm,gray,very thin] (-2.5,-2.5) grid (2.5,3.5);
\draw[<->] (-2.5,3) to (-1.5,3);
\draw[<->] (-2,2.5) to (-2,3.5);
\node at (-1.67,3.15) {$\scriptstyle{x}$};
\node at (-2.15,3.3) {$\scriptstyle{y}$};
%
%
\draw[<->,thick] (-2.5,-0.5) to (0.5,-0.5) to (0.5,-2.5);
\draw[->,thick] (0.5,-0.5) to (1,1) to (2.2,3.4);
\draw (-2.5,1) to (2.5,1);
\draw (-0.5,-2.5) to (-0.5,1);
\draw (-0.5,1) to (0.7,3.4);
\node at (-0.8,-2.3) {$\overline{D_3}$};
\node at (0.3,3.2) {$\overline{D_1}$};
\node at (-2.3,1.25) {$\overline{D_2}$};
%
\node at (-1.8,1) {$\times$};
%
\node at (-1.15,1.15) {$\sstyle{1+tx^{-1}}$};
%
\node at (-1.6,-0.3) {$\sstyle{x^{-(b-1)d_0-d_1}}$};
\node at (0.9,-2.2) {$\sstyle{y^{-ad_0}}$};
\node at (2.2,2.2) {$\sstyle{x^{bd_0}y^{ad_0}}$};
\node at (0.8,1.2) {$\circleed{1}$};
\node at (0.5,-0.5) {$\bullet$};
\node at (0.4,-0.35) {$\sstyle{p}$};
\end{tikzpicture}
\caption{$\scatt \exIII$}
\label{fig:Yab3comp}
\end{figure}

\begin{prop}
Denote by $N^{{\rm log},\psi}_d\left(\exIII\right)$ the genus 0 log GW invariant of $\exIII$ with a point class with psi insertion \cite[(4.2)]{BBvG2}. Then
\bea
N^{{\rm log},\psi}_d\left(\exIII\right) &= &\dbinom{ad_0}{d_0-d_1}\,,\\
\mathsf{N}^{\rm log}_d\left(\exIII\right)(\hbar) &= & \dfrac{[ad_0\left( (b-1)d_0+d_1 \right)]_q}{[1]_q}  \left[\begin{array}{c} ad_0 \\ d_0-d_1\end{array}\right]_q\,.
\label{eq:NdlogIII}
\eea
\label{prop:logIII}
\end{prop}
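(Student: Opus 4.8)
The plan is to run the quantum scattering-diagram algorithm of \cref{sec:scattdiags} and \cite[Section~4.2]{BBvG2}, in direct parallel with the proofs of \cref{prop:P1ab} and of the $\exII$ case; the two genuinely new features are that $l=3$ here, so the relevant tropical curve carries a true trivalent vertex at the base point $p$, and that we must also produce the $\psi$-insertion invariant.

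\emph{Toric model.} First I would exhibit a toric model. The fan of $\exIII$ has rays $(b,a),(-1,0),(-1,-1),(0,-1)$, with $D_1=D_{(b,a)}$ and $D_2=D_{(-1,0)}$ already toric and $D_3\in|D_{(-1,-1)}+D_{(0,-1)}|$ not; since $\chi(\exIII\setminus D)=1$, exactly one internal blow-up is needed to toricise. Resolving $D_3$ by the corner blow-up of $D_{(-1,-1)}\cap D_{(0,-1)}$, contracting the resulting $(-1)$-curve, and applying a convenient $\mathrm{SL}_2(\bbZ)$ change of coordinates produces, by \cite{AW}, a pair with the same log Gromov--Witten theory whose scattering diagram is the one in \cref{fig:Yab3comp}: three asymptotic rays $\ol{D_1},\ol{D_2},\ol{D_3}$ together with a single focus-focus singularity on $\ol{D_2}$, which we perturb off its ray to a wall with wall-crossing function $1+tx^{-1}$, the variable $t$ tracking the exceptional class that $\varphi$ identifies with $E=D_{(-1,-1)}$. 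This diagram differs from the one for $\exI$ only by the extra corner blow-up hidden inside $D_3$, which is a useful consistency check.

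\emph{The broken-line count.} By \cite[Proposition 4.2]{BBvG2}, $\mathsf{N}^{\rm log}_d(\exIII)(\hbar)$ is the identity-component structure constant of the product of the three theta functions of weights $d\cdot D_1=d_0$, $d\cdot D_2=(b-1)d_0+d_1$ and $d\cdot D_3=ad_0$, that is, the sum over a general base point $p$ (the outcome being independent of $p$ by \cite{GHS16}) of products of end-coefficients of three broken lines coming from the $\ol{D_1},\ol{D_2},\ol{D_3}$ directions and balancing at $p$. As drawn in \cref{fig:Yab3comp}, there is exactly one such configuration: the broken lines $\beta_2$ (monomial $x^{-(b-1)d_0-d_1}$) and $\beta_3$ (monomial $y^{-ad_0}$) run straight into $p$, while $\beta_1$ (monomial $x^{bd_0}y^{ad_0}$) crosses the focus-focus wall once; balancing momenta at $p$ forces $\beta_1$ to pick up $k=d_0-d_1=d\cdot E$ factors of $tx^{-1}$ there, which is also the constraint that the $t$-degree equal $d\cdot D_{(-1,-1)}$. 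By the quantum wall-crossing rule of \cite{bousseau2018quantum} this bending contributes the coefficient $\qbinom{ad_0}{d_0-d_1}_q$ — the relevant ``size'' being the $y$-component $ad_0$ of the incoming momentum of $\beta_1$, since the wall is horizontal — and the trivalent vertex at $p$, where $\beta_2$ and $\beta_3$ enter with momenta $(-(b-1)d_0-d_1,0)$ and $(0,-ad_0)$, contributes, once the normalisation $1/(2\sin(\hbar/2))^{l-2}$ in the definition of $\mathsf{N}^{\rm log}_d$ is accounted for, the quantum multiplicity $[ad_0((b-1)d_0+d_1)]_q/[1]_q$. Multiplying the two factors gives \eqref{eq:NdlogIII}.

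\emph{The $\psi$-insertion invariant, and the main difficulty.} For $N^{{\rm log},\psi}_d$ I would run the variant of \cite[Proposition 4.2]{BBvG2} computing the one-point, $\psi^{l-2}$-insertion genus-zero invariant from the same (now classical) scattering diagram: there the $\psi$-class absorbs the weight of the interior trivalent vertex and only the focus-focus contribution $\binom{ad_0}{d_0-d_1}$ survives, giving the first line of \eqref{eq:NdlogIII}; equivalently this is the $q\to1$ limit of $\mathsf{N}^{\rm log}_d$ divided by $ad_0((b-1)d_0+d_1)=(d\cdot D_2)(d\cdot D_3)$, the log-side mirror of \eqref{eq:locIII}. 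The hardest part will be this interface between the toric-model step and the vertex bookkeeping: one must verify that toricising $D_3$ creates a single focus-focus singularity, seated on $\ol{D_2}$ with wall function $1+tx^{-1}$, and that the refined trivalent vertex at $p$ truly contributes $[ad_0((b-1)d_0+d_1)]_q/[1]_q$ — that is, that $|m_{\beta_2}\wedge m_{\beta_3}|$ is exactly $(d\cdot D_2)(d\cdot D_3)$ and not some proper divisor of it — since this is precisely the factor by which $\mathsf{N}^{\rm log}_d(\exIII)$ exceeds a bare $q$-binomial. All remaining steps are bookkeeping parallel to \cref{prop:P1ab}.
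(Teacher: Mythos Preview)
Your broken-line computation is correct and matches the paper's argument exactly: one wall-crossing for the $D_1$-broken line picking up $\qbinom{ad_0}{d_0-d_1}_q$, a trivalent vertex at $p$ with multiplicity $[ad_0((b-1)d_0+d_1)]_q/[1]_q$, and the application of \cite[Propositions~4.1 and~4.2]{BBvG2} (the paper invokes 4.1, not 4.2, for the $\psi$-invariant; your description of the mechanism is right but the citation should point one earlier).

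Where you diverge from the paper is the toric-model step, and your description does not work as written. Blowing up the corner $D_{(-1,-1)}\cap D_{(0,-1)}$ accomplishes nothing: the non-toric $D_3$ is a generic smooth member of $|D_{(-1,-1)}+D_{(0,-1)}|$ and does not pass through that point, and the only $(-1)$-curve produced is the new exceptional divisor, whose contraction just undoes the blow-up. The paper's route is much more direct: since $\exIII$ is by construction the blow-up of $\bbP(1,a,b)$ at a smooth point of $D_{(-1,0)}$, the pair $(\exIII,D)$ is deformation equivalent to the interior blow-up of the toric pair $(\bbP(1,a,b),\text{toric boundary})$ at a non-torus-fixed smooth point of $D_{(-1,0)}$, so the scattering diagram is immediately the fan of $\bbP(1,a,b)$ with a single focus-focus singularity on the $(-1,0)$-ray --- no corner blow-ups, no contractions, no $\mathrm{SL}_2(\bbZ)$ change. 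You end up at the right diagram, but by asserting it rather than by the construction you describe.
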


The genus zero ($q \to 1$) limit of \cref{prop:logIII}, combined with \eqref{eq:locIII}, concludes the proof of \cref{thm:log-local}.

\begin{proof}
Since $\exIII$ is obtained from $\bbP(1,a,b)$ by blowing up a smooth point on $D_{(-1,0)}$,  the scattering diagram of $\exIII$ is given by the fan of $\bbP(1,a,b)$ with a focus-focus singularity in the direction $(-1,0)$. By \cite[Section 4.2]{BBvG2}, we now need to extract the identity component of the multiplication of three broken lines corresponding to the directions $D_i$ with weights $d\cdot D_i$, $i=1,2,3$. The broken line calculation is given in \cref{fig:Yab3comp}. Choosing the location of $p$ as in \cref{fig:Yab3comp}, there is only one possible wall-crossing for the broken line coming from the $D_1$-direction.
After crossing the wall at $\circleed{1}$, the broken line coming from the $D_1$-direction carries the monomial 
\[
\qbinom{ad_0}{d_0-d_1}_qt^{d_0-d_1}x^{(b-1)d_0+d_1}y^{ad_0}\,.
\]
The result then follows from \cite[Proposition 4.1]{BBvG2} for the invariant with psi class and by \cite[Proposition 4.2]{BBvG2} for the invariant with two point classes.
\end{proof}

\subsection{Open GW invariants}

For $\exIII$ the relevant open geometry is  a toric Lagrangian triple given by affine space $\bbC^3$ with two toric Lagrangians $L_1$, $L_2$ at framing $f_1=(a/b-1)$ and $f_2=0$; see \cref{fig:annp11n}.\footnote{A strict application of \cite[Construction 6.4]{BBvG2} would in fact return $\bbC^3$ with two toric Lagrangians at framing $(-1,-a/b)$; the equivalence of these two open Gromov--Witten setups is a consequence of the well-known symmetries of the topological vertex, see \cite[Section~3.4]{Aganagic:2003db}, as can also be easily verified in the foregoing formulas.} For this setup, denoting $j_i$ the winding number of open stable maps around $S^1 \hookrightarrow L_i$, $i=1,2$, we have
\bea 
\mathsf{O}_{j_1,j_2}\l((\exIII)^{\rm op}\r) = 
\sum_{i_1=0}^{j_1-1}\sum_{i_2=0}^{ j_2-1} \frac{(-1)^{i_1+i_2}}{j_1 j_2} \cW^{(c)}_{(j_1-i_1,1^{i_1}),( j_2-i_2,1^{i_2})}\l((\exIII)^{\rm op}\r) \,,
\label{eq:annYab1}
\eea 
\begin{figure}[t]
\begingroup%
  \makeatletter%
  \providecommand\color[2][]{%
    \errmessage{(Inkscape) Color is used for the text in Inkscape, but the package 'color.sty' is not loaded}%
    \renewcommand\color[2][]{}%
  }%
  \providecommand\transparent[1]{%
    \errmessage{(Inkscape) Transparency is used (non-zero) for the text in Inkscape, but the package 'transparent.sty' is not loaded}%
    \renewcommand\transparent[1]{}%
  }%
  \providecommand\rotatebox[2]{#2}%
  \newcommand*\fsize{\dimexpr\f@size pt\relax}%
  \newcommand*\lineheight[1]{\fontsize{\fsize}{#1\fsize}\selectfont}%
  \ifx\svgwidth\undefined%
    \setlength{\unitlength}{144.3133569bp}%
    \ifx\svgscale\undefined%
      \relax%
    \else%
      \setlength{\unitlength}{\unitlength * \real{\svgscale}}%
    \fi%
  \else%
    \setlength{\unitlength}{\svgwidth}%
  \fi%
  \global\let\svgwidth\undefined%
  \global\let\svgscale\undefined%
  \makeatother%
  \begin{picture}(1,0.95865837)%
    \lineheight{1}%
    \setlength\tabcolsep{0pt}%
    \put(0,0){\includegraphics[width=\unitlength,page=1]{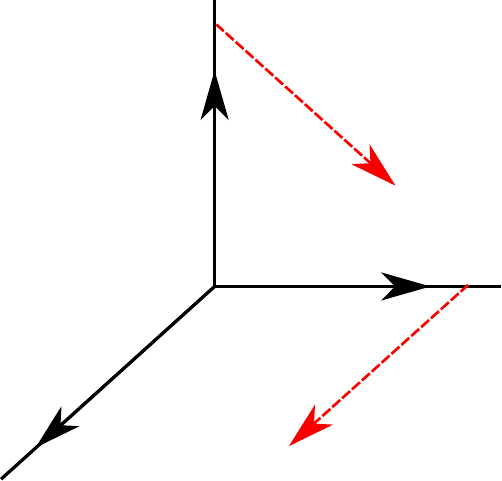}}%
    \put(0.56286927,0.82845202){\makebox(0,0)[lt]{\lineheight{1.25}\smash{\begin{tabular}[t]{l}$\mathsf{f}_1$\end{tabular}}}}%
    \put(0.62637676,0.22995036){\makebox(0,0)[lt]{\lineheight{1.25}\smash{\begin{tabular}[t]{l}$\mathsf{f}_2$\end{tabular}}}}%
  \end{picture}%
\endgroup%

\caption{The toric graph of $\big(\exIII\big)^{\rm op}$ for $a=2$, $b=1$.}
\label{fig:annp11n}
\end{figure}
where $\cW^{(c)}_{\a\b}$ is the connected 2-leg topological vertex at framing $(a/b-1,0)$,
\bea 
\cW^{(c)}_{\a\b}\l((\exIII)^{\rm op}\r) & \coloneqq &
\cW_{\a\b}\l((\exIII)^{\rm op}\r)-
\cW_{\a\emptyset}\l((\exIII)^{\rm op}\r)
\cW_{\emptyset\b}\l((\exIII)^{\rm op}\r) \nn \\
&=&
q^{a/b\kappa(\a)/2}(-1)^{(a/b-1)|\a|} \sum_{\delta \neq \emptyset} s_{\frac{\a^t}{\delta}}(q^\rho) s_{\frac{\b}{\delta}}(q^\rho)\,.
\eea 
We can express \eqref{eq:annYab1} as a $q$-series using the $q$-factorial expression for hook skew Schur functions \cite[App.~C]{BBvG2},
\beq 
s_{\frac{(d,1^{i})}{\delta}}(q^\rho) = 
\l\{
\begin{array}{cc}
\frac{(-1)^{-d-i+k+r} q^{\frac{1}{2} \left(d^2-2 d k+i+k^2-r\right)}}{(q;q)_{d-k} (q;q)_{i-r}}, & \delta = (k,1^{r})\,, \\ 
0 & \rm else\,,
\eary 
\r.
\eeq 
as
\bea 
 \mathsf{O}_{j_1,j_2}\l((\exIII)^{\rm op}\r) &=& \frac{(-1)^{a/b j_1+j_2+1} q^{\frac{1}{2} \left(\frac{a j_1^2}{b}+j_1 \left(\frac{a}{b}+1\right)+j_2^2\right)}}{j_1 j_2}\sum_{k,l_1,l_2=0}^\infty \sum_{r=0}^{k-1} q^{ \frac{arj_1}{b}} \mathsf{a}_{l_1,j_1,k} \mathsf{b}_{l_2,j_2,k} \mathsf{c}_{k} \,, \nn \\
\eea 
where we have shifted the indices of summation as  $i_1=l_1+k-r-1$, $i_2=l_2+r$, and moreover
\bea  
\mathsf{a}_{l_1,j_1,k} &\coloneqq & \frac{(-1)^{l_1} q^{\frac{1}{2} l_1 \left(l_1-1-2a/b  j_1\right)}}{(q;q)_{l_1} (q;q)_{j_1-k-l_1}} \,, \nn \\
\mathsf{b}_{l_2,j_2,k} &\coloneqq & \frac{(-1)^{l_2} q^{\frac{1}{2} l_2 \left(l_2+1+2k-2 j_2\right)}}{(q;q)_{l_2} (q;q)_{j_2-k-l_2}} \nn \,, \\
\mathsf{c}_{k} &\coloneqq & (-1)^k q^{\frac{1}{2} k \left(k-1-2a/b j_1-2 j_2\right)}\,.
\eea 
In the formulas above, $(q;q)_n$ is the usual $q$-Pochhammer symbol, $(q;q)_n \coloneqq (-1)^n q^{\frac{1}{4} n (n+1)} [n]_q!$. Performing the $l_2$ summation using the Cauchy binomial theorem in the form
\beq 
(q y; q)_n  = \sum_{m=0}^\infty \frac{(-y)^m q^{\frac{m(m+1)}{2}} (q;q)_n}{(q;q)_m (q;q)_{n-m}}
\label{eq:cauchy}
\eeq 
gives 
\beq 
\sum_{l_2=0}^\infty \mathsf{b}_{l_2,j_2,k} = \frac{(q^{k-j_2+1}; q)_{j_2-k}}{(q; q)_{j_2-k}} = \frac{1}{(q; q)_{j_2-k} (q; q)_{k-j_2}}=\delta_{j_2k}\,.
\eeq 
The sum over $k$ consists then of a single summand at $k=j_2$, and the sum over $l_1$, using \eqref{eq:cauchy}, is
\beq
\sum_{l_1=0}^{j_1-j_2} \mathsf{a}_{l_1,j_1,j_2} 
=
(-1)^{j_1-j_2} q^{\frac{1}{2}(j_2-j_1) (j_1 a/b+1)}\qbinom{aj_1/b}{j_1-j_2}_q
\eeq 
Therefore,
\bea 
\mathsf{O}_{j_1,j_2}\l((\exIII)^{\rm op}\r) &=& 
\frac{(-1)^{j_1 (1+a/b)+1+j_2}}{j_1 j_2} q^{-\frac{a j_1 \left(j_2-1\right)}{2 b}} \qbinom{ aj_1/b}{j_1-j_2}_q
\sum_{r=0}^{j_2-1} q^{rj_1 a/b} \nn \\
&=& 
(-1)^{j_1 (1+a/b)+j_2+1} \qbinom{ aj_1/b}{j_1-j_2}_q \frac{[j_1 j_2 a/b]_q}{[j_1 a/b]_q  j_1 j_2}
 \,.
\label{eq:openIII}
\eea 
From \eqref{eq:iota},
the winding number variables in $(\exIII)^{\rm op}$ and curve degrees in $Y^{[3]}_{(a,b)}$ are related as
\bea 
j_1 & \to & b d_0\,, \nn \nn \\ 
j_2 & \to & (b-1) d_0+d_1\,,
\eea  
which combined with \eqref{eq:openIII} and \cref{prop:logIII} concludes the proof of \cref{thm:logopen}.

\section{BPS invariants}

The closed-form higher genus GW expressions of the previous Sections put us now in a position to prove \cref{thm:openbps}. Write 
\beq 
 \Omega_{d}(Y(D))(q)  =
 \frac{[1]_{q}^2}{\prod_{i=1}^l[ d \cdot D_i]_{q}}
\sum_{k|d}(-1)^{\sum_{i=1}^{l}  d/k \cdot D_i+1}\mu(k)
\l(\frac{[k]_q}{k}\r)^{3-l}\mathsf{N}^{\rm log}_{d/k}(-\ri k \log q)
\,.
\label{eq:Omegad3}
\eeq 
From \eqref{eq:NdlogI}, \eqref{eq:NdlogII} and \eqref{eq:NdlogIII}, we have that obviously $\mathsf{N}^{\rm log}_{d/k}(-\ri k \log q) \in \bbZ[q^{\pm 1/2}]$ since it is a product of $q$-binomial coefficients. Then \eqref{eq:Omegad3} implies {\it a priori} that $\Omega_d(Y(D))(q)\in \bbQ(q^{1/2})$ with poles at $q=0,\infty$ and at most double poles at $q=\exp(2\pi \ri l/\hat d)$, with $\hat d \coloneqq \mathrm{lcm}\{ d \cdot D_i \}_{i=1}^l$. We have the following
\begin{prop}
Let $Y(D)=\exI$, $\exII$, $\exIII$. Then 
\beq 
\frac{\prod_{i=1}^l[ d \cdot D_i]_{q}}{[1]_{q}^2}\Omega_{d}(Y(D))(q) = \cO\l(q-\re^{\frac{2\pi \ri l}{\hat d}}\r)^2, \quad l=0, \dots, \hat d-1.
\eeq 
\end{prop}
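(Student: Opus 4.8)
The strategy is to feed the closed-form higher-genus formulas of the previous sections into the definition of $\Omega_d$ and reduce the claim to an elementary vanishing property of products of $q$-binomial coefficients at roots of unity. Set $P_d(q):=\frac{\prod_{i=1}^l[d\cdot D_i]_q}{[1]_q^2}\,\Omega_d(Y(D))(q)$; inserting \eqref{eq:Omegad3} gives
\[
P_d(q)=\sum_{k\mid d}(-1)^{\sum_{i=1}^l d/k\cdot D_i+1}\,\mu(k)\,\Big(\tfrac{[k]_q}{k}\Big)^{3-l}\,\mathsf{N}^{\rm log}_{d/k}(Y(D))(-\ri k\log q),
\]
and by \eqref{eq:NdlogI}, \eqref{eq:NdlogII} and \eqref{eq:NdlogIII} each $\mathsf{N}^{\rm log}_{d/k}$ is a product of at most two $q$-binomial coefficients in the variable $q^k$ (times, for $\exIII$, a ratio of quantum integers). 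In particular $P_d\in\bbZ[q^{\pm 1/2}]$, so it is enough to estimate, for each $\hat d$-th root of unity $\zeta$, the order of vanishing of $P_d$ at $\zeta$ and to compare it with the order of the pole that the prefactor $\frac{[1]_q^2}{\prod_i[d\cdot D_i]_q}$ has there, which is read off from the arithmetic of the intersection numbers $d\cdot D_i$; getting the former to dominate the latter simultaneously establishes the order-two vanishing in the statement and \cref{thm:openbps}.

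To carry this out at a fixed $\zeta$ of exact order $m\mid\hat d$, I would substitute $q=\zeta\,\re^{\varepsilon}$ and expand $P_d$ to order $\varepsilon^2$. The relevant $q^k$-binomials are evaluated at $\zeta^k$, a primitive $(m/\gcd(m,k))$-th root of unity, which is exactly the input for the $q$-analogue of Lucas' theorem
\[
\qbinom{N}{M}_{\xi}=\binom{\lfloor N/r\rfloor}{\lfloor M/r\rfloor}\,\qbinom{N\bmod r}{M\bmod r}_{\xi},\qquad \xi\ \text{a primitive}\ r\text{-th root of }1,
\]
used for the $\varepsilon^0$ term, together with its first-order refinement (the expansion of a $q$-binomial near a root of unity, whose $\varepsilon$-coefficient is a difference of quantum digamma values) for the $\varepsilon^1$ term. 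Splitting the divisor sum according to $g:=\gcd(m,k)$, all terms within one group reduce under $q$-Lucas to an ordinary binomial coefficient times a factor depending only on $g$ and $\zeta$, and the residual $k$-dependence assembles into a Möbius sum $\mu(g)\sum_{k'}\mu(k')(\cdots)$ that collapses by iterated use of $\sum_{n\mid N}\mu(n)=[N=1]$. What survives are precisely the contributions for which the base-$(m/g)$ additions of the linear forms appearing in the $q$-binomials produce no carries, and counting the forced carries matches the pole order of the prefactor, yielding the desired vanishing of the $\varepsilon^0$ coefficient — and, in fact, vanishing of $P_d$ to that order.

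The main obstacle, and the step I expect to absorb most of the work, is the vanishing of the $\varepsilon^1$ coefficient, which is what upgrades a simple zero to the double zero in the statement. One has to show that the logarithmic-derivative correction furnished by the refined $q$-Lucas expansion is, at fixed $\zeta$ and $g$, constant over the group of divisors $k$ modulo the same combinatorial data that governed the $\varepsilon^0$ analysis, so that it factors out of the Möbius sum already shown to vanish; the auxiliary factors $\big(\tfrac{[k]_q}{k}\big)^{3-l}$, and for $\exIII$ the prefactor $[ad_0((b-1)d_0+d_1)]_q/[1]_q$, must be checked to be holomorphic and non-vanishing at $\zeta$ except in the sporadic case $m\mid d$ for $\exIII$, where that prefactor itself contributes the extra factor of $(q-\zeta)$ needed to compensate a carry count that would otherwise fall one short. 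The $\varepsilon^0$ cancellation is the orbifold transcription of the computation already carried out in the smooth case in \cite{BBvG2}, and as an independent sanity check in the limit $q\to 1$ one can use that the generating series $\sum_d\mathsf{N}^{\rm log}_d(Y(D))\,x^d$ are algebraic, which forces the coefficients of their plethystic logarithms — the genus-zero BPS numbers $\Omega_d(Y(D))$ — to be integral.
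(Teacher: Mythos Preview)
Your approach for the order-zero term is essentially the explicit mechanism behind the paper's one-line reference to \cite[Thm~8.1]{BBvG2}: the paper simply says that the summands in the divisor sum ``can be grouped in pairs with leading order terms at $q=\re^{2\pi\ri l/\hat d}$ having opposite signs,'' which is the same M\"obius cancellation you are describing after $q$-Lucas reduces each $\mathsf{N}^{\rm log}_{d/k}(-\ri k\log q)$ to a classical binomial. So far so good.

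The gap is at order one. You yourself flag the $\varepsilon^1$ coefficient as ``the main obstacle'' and then propose to show that the logarithmic-derivative correction in the refined $q$-Lucas expansion is \emph{constant over the group of divisors $k$ at fixed $g=\gcd(m,k)$}, so that it factors out of the same M\"obius sum already shown to vanish. This constancy is asserted, not proved, and it is not clear that it holds: the first-order term of $\qbinom{N}{M}_{q^k}$ near a primitive $m$-th root depends on the base-$(m/g)$ digits of $N$ and $M$, which for the linear forms in $d_0,d_1$ appearing in \eqref{eq:NdlogI}--\eqref{eq:NdlogIII} vary with $k$ even at fixed $g$. Without an argument pinning this down, the proof is incomplete precisely at the step that distinguishes a simple zero from the double zero in the statement. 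The paper does not attempt this term-by-term expansion at all; it instead invokes \cite[Lemma~8.3]{BBvG2} as a general result that upgrades the simple vanishing to quadratic vanishing in one stroke. You should look up that lemma and see whether it gives you the double zero directly from the structure of $P_d$ rather than from a second round of M\"obius cancellation.

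Two smaller points. First, the claim ``in particular $P_d\in\bbZ[q^{\pm 1/2}]$'' is premature for $l=2$: each summand carries a factor $[k]_q/k$, so $P_d$ is a priori only in $\bbQ[q^{\pm 1/2}]$. This does not affect the vanishing-order analysis, but it undercuts the sentence that follows it. Second, your remark that the factors $([k]_q/k)^{3-l}$ ``must be checked to be holomorphic and non-vanishing at $\zeta$'' is stronger than needed: they are Laurent polynomials over $\bbQ$, so holomorphic automatically, and their vanishing only helps you.
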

\begin{proof}
The vanishing at linear order can be shown with the exact same arguments of the proof of \cite[Thm~8.1]{BBvG2} by replacing therein $\Theta_d(q) \to \mathsf{N}^{\rm loc}_{d}(-\ri \log q)$ (for $\exI$ and $\exII$) and $\Xi_d(q) \to \mathsf{N}^{\rm loc}_{d}(-\ri \log q)$ (for $\exIII$): the summands in the divisor sums in \eqref{eq:Omegad3} can be grouped in pairs with leading order terms at $q=\re^{\frac{2\pi \ri l}{\hat d}}$ having opposite signs, ensuring the l.h.s. is zero at that order. The quadratic vanishing is a consequence of \cite[Lemma~8.3]{BBvG2}. 
\end{proof}

The Proposition then implies that $\Omega_d(Y(D)) \in \bbQ[q^{\pm1/2}]$. Since $1/[d \cdot D_i]_q \in q^{-d \cdot D_i/2}\bbZ[[q]]$, from \eqref{eq:Omegad3} we have $\Omega_d(Y(D)) \in \bbZ[q^{-1/2}][[q^{1/2}]]$, and thus $\Omega_d(Y(D)) \in \bbZ[q^{\pm1/2}]$ from the previous Proposition. The claim of \cref{thm:openbps} then follows.

\subsection{Quiver DT invariants}

\begin{figure}[t]
    \centering
    \includegraphics[scale=1.3]{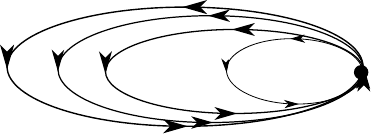}
    \caption{The quiver for $\exI$ for $a=1$, $b=3$.}
    \label{fig:vertquiv}
\end{figure}

For $l=2$ and whenever $a=1$ or $b=1$, $Y^{\rm op}(D)$ is an integrally framed toric Lagrangian triple of `strip' type \cite{Panfil:2018sis},  that is, it consists of a single integrally framed outer Aganagic--Vafa Lagrangian on a smooth toric Calabi--Yau threefold whose fan is a cone over an integral polytope of unit width. Then \cref{thm:kpdt} follows from a proof identical in all its parts to that of \cite[Thm~7.3]{BBvG2}, with framings now equal to $f=b$ (for $\exI$ with $a=1$) and $f=b-1$ (for $\exII$ with $a=1$).

\begin{example}
For $Y(D)=\bbP(1,1,b)^{[2]}$, $Y^{\rm op}(D)$ is the 1-legged vertex at framing $b$, for which the corresponding quiver is the $b+1$-loop quiver \cite{Panfil:2018sis} (see \cref{fig:vertquiv}). The dimension vector is here identically identified with the curve degree $d$, $\kappa=\mathrm{id}$, and the integral shift in \cref{thm:kpdt} vanishes, $\a_i=0$. The Klemm--Pandharipande invariants $\mathrm{KP}_d(E_{\bbP(1,1,b)^{[2]}})$ are then up to a sign the polynomials in $\frac{1}{d!}\bbZ[b]$ computed by Reineke in \cite[Thm~3.2]{MR2889742}. Explicitly, we have
\bea
\mathrm{KP}_d\big(E_{\bbP(1,1,b)^{[2]}}\big) &=&
\bigg\{(-1)^b,\frac{1}{4}  \left( (2 b+1)-(-1)^b\right),\frac{1}{2} (-1)^b b (b+1),\nn \\ 
& & \frac{1}{3} b (b+1) (2 b+1),\frac{5}{24} (-1)^b b (b+1) (5 b
   (b+1)+2), \dots\bigg\}
   \nn \\
   &=& (-1)^{b+d+1}\mathrm{DT}_d\big(\mathsf{Q}(\bbP(1,1,b)^{[2]})\big)
\eea 
\end{example}

\begin{example}
For $Y(D)=Y_{(1,b)}^{[2]}$, the corresponding quiver is given in \cref{fig:conifquiv}. \\

\begin{figure}[t]
    \centering
    \includegraphics[scale=1.5]{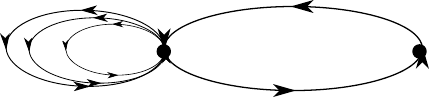}
    \caption{The quiver for $\exII$ for $a=1$, $b=3$.}
    \label{fig:conifquiv}
\end{figure}
The map between vertices of the quiver and effective generators of $\hhh_2(Y_{(1,b)}, \bbZ)$ is 
\bea 
v_1 & \stackrel{\kappa}{\longrightarrow} & f \nn \\
v_2 & \stackrel{\kappa}{\longrightarrow} & E
\eea 
and the integral shifts in \cref{thm:kpdt} are $\a_1=0$, $\a_2=1$.  Klemm--Pandharipande invariants for small degrees $d_0, d_1>0$ are given in \cref{tab:DTconif}: note that despite being rational polynomials in $b$, they take integer values for $b\in \bbZ^+$. The quiver DT invariants of $\mathsf{Q}(Y_{(1,b)}^{[2]})$ are obtained as their absolute values.
\begin{table}[h]
    \centering
    \begin{tabular}{|c|c|c|c|c|}
    \hline 
\diagbox{$d_0$}{$d_1$} & 1 & 2 & 3 & 4 \\ \hline 
1 & $(-1)^b$ & 0 & 0 & 0  \\ \hline
2 &
 $-b$ & $\frac{(-1)^b\left((-1)^b (2 b+1)-1\right)}{4}  $ & 0 & 0  \\ \hline
 3 &
 $\frac{(-1)^b b (3 b-1)}{2} $ & $-\frac{(-1)^b b (3 b+1)}{2} $ & $\frac{ (-1)^b b (b+1)}{2}$ & 0  \\ \hline
 4 &
 $-\frac{b\left(8 b^2-6 b+1\right)}{3}$ & $4 b^3$ & $-\frac{b (2 b+1) (4 b+1)}{3} $ & $\frac{b (b+1) (2 b+1)}{3} $  \\ \hline
 5 &
 $\frac{(-1)^b b (5 b-3) (5 b-2) (5 b-1)}{24} $ & $\frac{(-1)^{b+1} b (5 b-2) (5 b-1) (5 b+1)}{12} $ & $\frac{(-1)^b b (5 b-1) (5 b+1) (5 b+2)}{12} $ &
   $\frac{(-1)^{b+1} b (5 b+1) (5 b+2) (5 b+3)}{24}$   \\ \hline
    \end{tabular}
    \caption{BPS/KP invariants of $Y_{(1,b)}^{[2]}$. }
    \label{tab:DTconif}
\end{table}
\end{example}

\subsection*{Acknowledgements} Boris Dubrovin has been a figure of immense importance for the fields of Geometry and Mathematical Physics in general, and an attentive and kind mentor for some of us in particular. Even if centred in a topic far from the theory of integrable systems, this paper bears a very direct intellectual debt to Boris' work (most crucially, in the 2-point reconstruction lemma proving \cref{thm:log-local} for $\exIII$). It is a privilege to be able to dedicate this paper as a modest testament to his legacy.

\bibliography{miabiblio}

\end{document}